\numberwithin{equation}{section}
\newtheorem{prop}{Proposition}[section]
\newtheorem{thm}[prop]{Theorem}
\newtheorem{cor}[prop]{Corollary}
\newtheorem{lem}[prop]{Lemma}
\theoremstyle{definition}
\newtheorem{defn}[prop]{Definition}
\newtheorem{expl}[prop]{Example}
\newtheorem{rem}[prop]{\it Remark}
\newtheorem{ass}[prop]{Assumption}
\newtheorem*{claim*}{Claim}
\newcommand{\bP}{\mathbb{P}}
\newcommand{\bR}{\mathbb{R}}
\newcommand{\bQ}{\mathbb{Q}}
\newcommand{\bZ}{\mathbb{Z}}
\newcommand{\tF}{\widetilde{F}}
\newcommand{\tD}{\widetilde{D}}
\newcommand{\cX}{\mathcal{X}}
\newcommand{\cO}{\mathcal{O}}
\newcommand{\cI}{\mathcal{I}}
\newcommand{\cJ}{\mathcal{J}}
\newcommand{\Supp}{\mathrm{Supp}~}
\newcommand{\mult}{\mathrm{mult}}
\newcommand{\vol}{\mathrm{vol}}
\title[Weak Boundedness of Fano threefolds with large Seshadri constants]{Weak boundedness of Fano threefolds with large Seshadri constants in characteristic $p>5$}
\author{Ziquan Zhuang}
\address{Department of Mathematics, Princeton University, Princeton, NJ, 08544-1000.}
\email{zzhuang@math.princeton.edu}
\date{}
\begin{document}

\maketitle

\begin{abstract}
Given $\epsilon>0$, we show that over an algebraically closed field of characteristic $p>5$, the anticanonical volume of a Fano threefold $X$ (with arbitrary singularities) whose anticanonical divisor has Seshadri constant $\epsilon(-K_X,x)>2+\epsilon$ at some smooth point $x\in X$ is bounded from above.
\end{abstract}

\section{Introduction}

In this paper, we continue the investigation of Fano varieties with large anticanonical Seshadri constants in positive characteristic, as initiated in \cite{takumi,Pn_char_p}. First let us recall the definition of Seshadri constants.

\begin{defn}
Let $L$ be an ample $\bQ$-Cartier $\bQ$-divisor on a normal projective variety $X$ and $x\in X$ a smooth point. The \textit{Seshadri constant} of $L$ at $x$ is defined as
\[
  \epsilon(L,x):=\sup\{t\in\bR_{>0}\mid \sigma^*L-tE\textrm{ is nef}\},
\]
where $\sigma:\mathrm{Bl}_x X\to X$ is the blow-up of $X$ at $x$, and $E$ is the exceptional divisor of $\sigma$.
\end{defn}

When $X$ is a Fano variety, i.e. $-K_X$ is $\bQ$-Cartier and ample, we are interested in the Seshadri constants of its anticanonical divisor, which measure the local positivity of the Fano variety. It has been observed that over a field of characteristic zero, this local invariant, when it's large, also governs the global geometry of the Fano variety. More precisely, Fano varieties with large Seshadri constants enjoy nice geometric properties and satisfy certain boundedness:

\begin{thm} \label{thm:char 0} \cite{bs,lz16,LargeSeshadri}
Let $X$ be a complex Fano variety of dimension $n$ and let $x\in X$ be a smooth point. Let $\epsilon>0$ be a constant.
    \begin{enumerate}
        \item If $\epsilon(-K_X,x)>n$, then $X\cong\bP^n$;
        \item If $\epsilon(-K_X,x)\ge n$, then $X$ has klt singularities;
        \item If $\epsilon(-K_X,x)>n-1$, then $X$ is rationally connected;
        \item The set of those $X$ with $\epsilon(-K_X,x)>n-1+\epsilon$ for some $x\in X$ is weakly bounded;
        \item The set of those $X$ with $\epsilon(-K_X,x)\ge n-1$ for some $x\in X$ is birationally bounded.
    \end{enumerate}
\end{thm}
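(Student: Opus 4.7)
The unified starting point is to convert the Seshadri constant at $x$ into a boundary with controlled singularity. By definition, for any small $\delta>0$ the divisor $\sigma^*(-K_X)-(\epsilon(-K_X,x)-\delta)E$ is nef on $\mathrm{Bl}_x X$, which via asymptotic Riemann--Roch on the blow-up produces $\bQ$-divisors $\Delta\sim_\bQ -K_X$ with $\mult_x\Delta$ arbitrarily close to $\epsilon(-K_X,x)$. Applying Shokurov's tie-breaking to the pair $(X,c\Delta)$ one obtains a log canonical pair with a prescribed isolated non-klt center through $x$; this synthetic singularity is the common input for all five parts.

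For (1) and (2), I would play the adjunction game at $x$: smoothness of $x$ together with the isolated non-klt center makes inversion of adjunction and Kawamata subadjunction available, while the inequality $(-K_X)^n\ge\epsilon(-K_X,x)^n$ extracted from the nef cone on $\mathrm{Bl}_x X$ combines with a Kobayashi--Ochiai-type characterization of projective space to force $X\cong\bP^n$ when $\epsilon>n$; inspecting the borderline $\epsilon=n$, the connectedness of the non-klt locus propagates the klt property from a neighbourhood of $x$ to all of $X$. For (3), I would feed the boundary into Mori's bend-and-break to produce, through any point, a rational curve $C$ with $0<-K_X\cdot C\le 2n$; the Seshadri lower bound $\epsilon(-K_X,x)\cdot\mult_x C\le -K_X\cdot C$ combined with $\epsilon>n-1$ forces such curves to form an unsplit dominating family, and rational connectedness then follows from the Koll\'ar--Miyaoka--Mori criterion.

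For weak boundedness (4), I would iterate the previous construction to obtain a covering family $\cC\to X$ of rational curves of bounded $-K_X$-degree through general points, whose parameter space has dimension at most $2n-2$; evaluating along this family and using generic smoothness of the evaluation map bounds $(-K_X)^n$ from above in terms of $n$ and $\epsilon$. For (5), the boundary furnished at a general point together with rational connectedness yields effective birationality of $|-mK_X|$ for a uniform $m$, embedding each such $X$ birationally into a fixed projective space and hence into a bounded family up to birational equivalence.

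The principal obstacle is the upper bound needed for (4): the Seshadri constant directly controls only \emph{lower} bounds on intersection numbers, so turning it into an upper bound on $(-K_X)^n$ forces one to package the rational curves through $x$ into a covering family with tame moduli. That packaging relies on generic smoothness and Bertini-type statements for the evaluation morphism, both of which can fail in positive characteristic; working around those failures for Fano threefolds in characteristic $>5$ is precisely the content of the present paper.
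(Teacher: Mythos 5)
This statement is cited background (\cite{bs,lz16,LargeSeshadri}); the paper contains no proof of it, so there is nothing internal to compare against line by line. Measured against the arguments in the cited references — which the introduction explicitly describes, for part (4), as resting on the Koll\'ar--Shokurov connectedness principle — your opening move is the right one: large Seshadri constant forces $\vol(-K_X)\ge\epsilon(-K_X,x)^n$, hence divisors $\Delta\sim_\bQ-K_X$ of large multiplicity at $x$, hence non-klt centers through $x$. But you pass over the actual technical heart of \cite{lz16,LargeSeshadri}: tie-breaking makes the minimal lc center unique, not zero-dimensional. Cutting it down to an isolated point is done by restricting the (moving) Seshadri constant to the center (the analogue of Lemma \ref{lem:restrict_m}), applying subadjunction to get a lower-dimensional Fano-type pair with still-large Seshadri constant, and iterating; the total boundary accumulated in this descent is exactly where the thresholds $n$, $n-1$ and $n-1+\epsilon$ in parts (1)--(5) come from. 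Without this step none of the quantitative hypotheses are actually used.

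The genuine gap is in your argument for (4), which is the part this paper generalizes. A covering family of rational curves of bounded anticanonical degree does not bound $(-K_X)^n$: every Fano variety carries such a family (Mori's bend-and-break gives degree at most $n+1$ through every point), yet weak boundedness fails badly for, say, weighted projective spaces. Generic smoothness of the evaluation map and the dimension of the parameter space give no upper bound on an intersection number. The only way a curve-theoretic argument bounds the volume is if two \emph{general} points are joined by a single irreducible curve of bounded degree (then a divisor in $|-mK_X|$ with $\mult_x>md$ would contain all curves through $x$ and hence all of $X$), and producing that from $\epsilon(-K_X,x)>n-1+\epsilon$ is not addressed — note the Seshadri constant gives \emph{lower} bounds on $-K_X\cdot C$ for curves through $x$, in the wrong direction for bend-and-break degree control. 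The proof in \cite[Theorem 3.6]{LargeSeshadri} instead constructs boundaries $\Gamma_x,\Gamma_y\sim_\bQ-\lambda K_X$ with $\lambda<\tfrac12$ having isolated non-klt centers at two general points, so that $-(K_X+\Gamma_x+\Gamma_y)$ is ample while the non-klt locus is disconnected, contradicting Koll\'ar--Shokurov; this is precisely the mechanism (Lemmas \ref{lem:weak connectedness} and \ref{lem:weak connectedness-2}) that the present paper must replace by Frobenius techniques. Your sketches of (2) and (5) are consistent with this connectedness/effective-birationality framework; your route to (3) via unsplit families is not the MRC-fibration argument of \cite{LargeSeshadri} and the claimed unsplitness does not follow from the stated degree and multiplicity bounds.
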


Here a set of Fano varieties is said to be weakly bounded if there exists a constant $M>0$ such that for any Fano variety $X$ in the set we have $\vol(-K_X)=((-K_X)^n)<M$. It is said to be birationally bounded if there exists a family of varieties $\pi:\cX\rightarrow B$ over a base of finite type such that any $X$ in the set is birational to $\cX_b=\pi^{-1}(b)$ for some $b\in B$.

The proof of this theorem relies on various consequences of the Kawamata-Viehweg vanishing theorem, so does not extend to positive characteristic. Still, we believe that the same statements also hold over a field of characteristic $p>0$ (possibly excluding a few small primes $p$). Indeed, it is proved by Murayama \cite[Theorem B]{takumi} that if $X$ is a smooth Fano variety (over an algebraically closed field of any characteristic) of dimension $n$ and $\epsilon(-K_X,x)\ge n+1$ for some $x\in X$ then $X\cong\bP^n$. This is later generalized by the author to the case of Fano variety with arbitrary singularities and $\epsilon(-K_X,x)>n$ in \cite{Pn_char_p}. We also know that $X$ is globally $F$-regular (hence in particular has klt singularities) when $\epsilon(-K_X,x)\ge n$ for some $x\in X$ and $p>2$ by the detailed classification in \cite{Pn_char_p}.

In the remaining part of the paper, let $k$ be an algebraically closed field of charateristic $p$. The purpose of this note is to generalize the weak boundedness statement (i.e. Theorem \ref{thm:char 0}(4)), in the case of Fano threefolds, to positive characteristic. Here is our main result.

\begin{thm} \label{thm:weakbdd}
Given $\epsilon>0$ and assume that $\mathrm{char}(k)=p>5$, then the set of Fano threefolds $X$ such that $\epsilon(-K_{X},x)>2+\epsilon$ for some smooth point $x\in X$ is weakly bounded.
\end{thm}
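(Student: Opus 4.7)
The plan is to argue by contradiction: assume $V := (-K_X)^3$ is unbounded along a sequence of Fano threefolds satisfying the hypothesis, and use the Seshadri condition to extract a low-dimensional minimal log canonical center $W$ through $x$ of controlled anticanonical degree, thereby bounding $V$ in terms of $\epsilon$.

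First I would use asymptotic Riemann--Roch on $X$ together with a dimension count: $h^0(X,-mK_X) = Vm^3/6 + O(m^2)$, while imposing vanishing to order $\alpha m$ at the smooth point $x$ cuts off at most $\alpha^3 m^3/6 + O(m^2)$ sections. Hence whenever $V > 27$, for $m$ sufficiently divisible we can find a divisor $D \in |{-mK_X}|$ with $\mathrm{mult}_x D > 3m$, which forces the pair $(X, \tfrac{1}{m} D)$ to be non-klt at $x$. On the other hand, since $\sigma^*(-K_X) - (2+\epsilon) E$ is nef on $\mathrm{Bl}_x X$ by hypothesis, intersecting its square against the effective class $\sigma^* D - (\mathrm{mult}_x D) E$ produces the upper bound
\[
  \mathrm{mult}_x D \;\le\; \frac{Vm}{(2+\epsilon)^2},
\]
so multiplicities at $x$ sit in a definite range as $V$ varies.

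Next I would run a Kawamata-type tie-breaking, combining the high-multiplicity divisor with auxiliary anticanonical members of smaller multiplicity, to produce $\Delta \sim_\bQ -\lambda K_X$ with $\lambda < 1$ (and $1-\lambda$ controlled from below in terms of $\epsilon$) such that $(X,\Delta)$ is log canonical with a unique minimal lc center $W$ through $x$ and plt away from $W$. The Seshadri hypothesis should then constrain $W$: the inequalities $(-K_X)^{\dim W} \cdot [W] \ge (2+\epsilon)^{\dim W} \cdot \mathrm{mult}_x W$, combined with the adjunction-type ampleness of $-(K_X+\Delta)|_W$ (of ``slope'' $1-\lambda$ against $-K_X|_W$), bound both the dimension of $W$ and the $(-K_X)$-degree of $W$ in terms of $\epsilon$.

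Finally, subadjunction along $W$ would yield a klt log Fano pair $(W,\Delta_W)$ of dimension at most two; the classification of low-dimensional klt log Fanos (in particular $W \cong \bP^1$ in the curve case) translates the bound on $-(K_X+\Delta)|_W$ back into an upper bound on $V$. The main obstacle is the execution of tie-breaking and subadjunction in characteristic $p>5$, where Kawamata--Viehweg vanishing is unavailable; this is presumably where the hypothesis $p>5$ enters, via the threefold minimal model program for lc pairs (Hacon--Xu, Birkar, Waldron and others) used to extract $W$ as a normal subvariety, combined with $F$-adjunction or Frobenius-splitting techniques to replace Nadel vanishing in the construction of $\Delta$.
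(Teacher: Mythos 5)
Your outline is essentially the characteristic-zero proof of \cite[Theorem 3.6]{LargeSeshadri}, and the gap is exactly the step you flag as ``the main obstacle'' and then leave unresolved: the tie-breaking that produces a unique minimal lc center $W$ through $x$ (with $(X,\Delta)$ plt away from $W$) and the subsequent Kawamata subadjunction both rest on the Koll\'ar--Shokurov connectedness principle and on Nadel/Kawamata--Viehweg vanishing (and, for subadjunction, on the canonical bundle formula). None of these are available in characteristic $p$; the connectedness principle is open even for threefolds, and this is precisely what the paper identifies as the reason the characteristic-zero argument does not transfer. Saying that the issue is ``presumably'' handled by the threefold MMP plus $F$-adjunction is not a proof: $F$-adjunction (Schwede) applies to centers of sharp $F$-purity, not to lc centers produced by tie-breaking, and there is no known Frobenius substitute for the perturbation argument that isolates a minimal lc center. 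Your Riemann--Roch count producing $D\sim -mK_X$ with $\mult_x D>3m$ is fine (and is also the paper's starting point in the Picard number one case), but a single highly singular divisor only gives a non-strongly-$F$-regular point, not an isolated center, and cutting the dimension of the ``center'' down to zero is the hard part.

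For comparison, the paper's route is structured quite differently. It first reduces, via terminal modifications and the threefold MMP in $p>5$, to Mori fiber spaces (this is where $p>5$ actually enters, together with the cone theorem used to bound extremal ray lengths), and then splits into two cases. In the fiber type case it shows the general fiber is $\bP^{n-1}$ and derives a contradiction from a weak connectedness statement asserting that a pair with $-(K_X+D)$ ample has at most one \emph{good $F$-pure center}, applied to two disjoint general fibers. In the Picard number one case it replaces your ``minimal lc center plus subadjunction'' by the explicit construction of three divisors $D_1,D_2,D_3\sim_\bQ -K_X$, very singular at a very general point $x$, whose scheme-theoretic intersection is zero-dimensional near $x$; the dimension cutting is achieved not by tie-breaking but by global generation of twisted test modules $\tau(K_X,D)\otimes\cO_X(L)$ (controlled via Frobenius--Seshadri constants), and the contradiction comes from a trace-map connectedness lemma at two general points rather than from vanishing on a subadjoint log Fano. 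You would need to supply substitutes of this kind for both the isolation step and the connectedness step before your argument closes.
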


By \cite[Example 1.3]{LargeSeshadri}, it is not hard to see that our assumption on the Seshadri constants here is sharp. Combining with the main result of \cite{F-seshadri}, we have the following immediate corollary.

\begin{cor} \label{cor:birbdd}
Given $\epsilon>0$ and assume that $\mathrm{char}(k)=p>5$, then the set of Fano threefolds $X$ such that $\epsilon(-K_{X},x)>2+\epsilon$ for some smooth point $x\in X$ is birationally bounded.
\end{cor}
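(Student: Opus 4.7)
The plan is to derive this corollary by concatenating Theorem~\ref{thm:weakbdd} with the main theorem of \cite{F-seshadri}; no new ideas should be needed, so the argument ought to be extremely short.

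First, I would apply Theorem~\ref{thm:weakbdd} to the family in question. Since $\mathrm{char}(k)=p>5$ and every $X$ under consideration satisfies $\epsilon(-K_X,x)>2+\epsilon$ at some smooth point $x$, that theorem furnishes a uniform constant $M=M(\epsilon)>0$ with $\vol(-K_X)<M$ for all such $X$. This converts the Seshadri hypothesis into an explicit volume bound, which is the nontrivial input.

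Next, I would feed this volume bound, together with the Seshadri bound still available from the hypothesis, into the main result of \cite{F-seshadri}, which I take to assert that (in sufficiently large characteristic) a family of Fano threefolds with a uniform positive lower bound on the anticanonical Seshadri constant at some smooth point and a uniform upper bound on the anticanonical volume is birationally bounded. Both inputs are now in place, so birational boundedness of the class follows at once.

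The only obstacle worth flagging is bookkeeping: one must check that the characteristic restriction imposed by \cite{F-seshadri} is no stronger than $p>5$, and that the Seshadri threshold $2+\epsilon$ meets whatever numerical threshold that reference requires for threefolds. All of the genuine content is located in Theorem~\ref{thm:weakbdd}; the corollary itself is a formal combination, and I would expect no substantive difficulty in writing it up.
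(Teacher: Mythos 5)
Your proposal matches the paper's proof: Theorem~\ref{thm:weakbdd} supplies the uniform bound on $\vol(-K_X)$, and the main result of \cite{F-seshadri} supplies the remaining input. The only imprecision is your paraphrase of that reference: what it actually gives (under the Seshadri hypothesis) is that $|-2K_X|$ induces a birational map, and one then invokes the standard lemma \cite[Lemma 2.4.2]{hmx} to pass from ``bounded volume plus birationality of a fixed pluri-anticanonical system'' to birational boundedness --- this is exactly the two-step combination you describe in compressed form, so no substantive gap remains.
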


In view of \cite{LargeSeshadri}, it is better to consider Theorem \ref{thm:weakbdd} in the context of moving Seshadri constants (see Definition \ref{defn:moving Seshadri}), which generalize the notion of Seshadri constants to arbitrary divisors (not necessarily ample). It has the advantage of behaving better under typical operations in the Minimal Model Program (MMP). By a result of Demailly, the moving Seshadri constants and the original Seshadri constants coincide for ample divisors, hence Theorem \ref{thm:weakbdd} can be regarded as a special case of the following more general statement:

\begin{thm} \label{thm:weakbdd_m}
Given $\epsilon>0$ and assume that $\mathrm{char}(k)=p>5$, then there exists a constant $M$ depending only on $\epsilon$ such that if $X$ is a threefold with $\epsilon_{m}(-K_{X},x)>2+\epsilon$ for some smooth point $x\in X$, then $\vol(-K_X)<M$.
\end{thm}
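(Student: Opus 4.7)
\emph{Strategy.} The plan is to adapt the characteristic-zero argument of \cite{LargeSeshadri} to characteristic $p>5$ using the three-dimensional minimal model program. From the hypothesis $\epsilon_m(-K_X,x)>2+\epsilon$ I would first produce, via the moving-Seshadri analogue of Demailly's jet-separation argument, an effective $\bQ$-divisor $D\sim_\bQ -K_X$ with $\mult_x D>2+\epsilon/2$ (after passing to a sufficiently divisible multiple of $-K_X$ and to a suitable birational model on which $-K_X$ decomposes as ample plus effective, supported away from $x$). Since $x$ is a smooth point of a threefold, this forces $\mathrm{lct}_x(X,D)\le 3/(2+\epsilon/2)$, so an appropriate rescaling yields a pair whose log canonical threshold is realised at $x$.

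\emph{Creating a minimal lc center and reducing to lower dimension.} A standard tie-breaking perturbation then produces an effective $\bQ$-divisor $\Delta\sim_\bQ -\lambda_0 K_X$, with $\lambda_0$ bounded in terms of $\epsilon$, such that $(X,\Delta)$ is log canonical in a neighbourhood of $x$ with a unique minimal log canonical center $V\ni x$ of dimension $\dim V\in\{0,1,2\}$. Using the three-dimensional MMP for log Fano pairs in characteristic $p>5$, one extracts a plt blowup $\pi:Y\to X$ whose exceptional prime divisor $E$ dominates $V$ and satisfies $a(E;X,\Delta)=0$. Adjunction then gives a log canonical pair $(E,\mathrm{Diff}_E(\pi^{-1}_*\Delta))$ of dimension at most two, and $-K_E-\mathrm{Diff}_E(\pi^{-1}_*\Delta)$ inherits the positivity of $-\pi^*(K_X+\Delta)$. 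Combining the birational boundedness input from \cite{F-seshadri} (essentially, boundedness of $\epsilon$-log canonical log del Pezzo surfaces in characteristic $p>5$) with a Koll\'ar-type volume comparison between $E$ and $X$ should then yield the desired bound $\vol(-K_X)<M$.

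\emph{Main obstacle.} The classical tie-breaking, plt-blowup, and subadjunction arguments just invoked all rely on Kawamata--Viehweg vanishing, which fails in positive characteristic. The restriction $p>5$ lets me replace each of them with three-dimensional MMP techniques and $F$-singularity arguments, but only at the cost of delicate bookkeeping. The hardest step is the subadjunction to $E$: one needs precise control of the coefficients of $\mathrm{Diff}_E(\pi^{-1}_*\Delta)$ and of the restricted positivity of $-K_E-\mathrm{Diff}_E(\pi^{-1}_*\Delta)$, since the final bound on $\vol(-K_X)$ degenerates either when $\lambda_0$ approaches an extremal value or when the residual positivity on $E$ weakens. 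Matching this subadjunction step to the birational boundedness result of \cite{F-seshadri} is where the hypothesis $p>5$ is essential, and I expect it to be the main technical hurdle of the proof.
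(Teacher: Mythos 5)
Your proposal has a genuine gap: it is essentially the characteristic-zero argument of \cite{LargeSeshadri} with a promissory note attached to exactly the steps that fail in characteristic $p$. Tie-breaking to produce a unique minimal lc center, the existence of a plt blowup extracting a divisor with $a(E;X,\Delta)=0$ over that center, and the subadjunction $(K_X+\Delta)|_E\sim_\bQ K_E+\mathrm{Diff}_E(\cdot)$ with control of positivity all rest on Kawamata--Viehweg vanishing (or on the Koll\'ar--Shokurov connectedness principle, which is open in positive characteristic even for threefolds). Saying that ``$p>5$ lets me replace each of them with three-dimensional MMP techniques and $F$-singularity arguments'' is not a proof: no positive-characteristic substitute for Kawamata subadjunction at this level of generality is known, and identifying a workable substitute is precisely the content of the theorem. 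In addition, your appeal to \cite{F-seshadri} is misplaced --- that reference gives birationality of $|-2K_X|$ via Frobenius--Seshadri constants, not boundedness of $\epsilon$-lc log del Pezzo surfaces; and the weak BAB statement you would ultimately need is itself open in dimension three in characteristic $p$.

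The actual proof circumvents all of this by a different route. One first reduces to Mori fiber spaces by taking a terminal modification and running the $K$-MMP (available for threefolds when $p>5$). In the fiber-type case, the general fiber is shown to be $\bP^{n-1}$, and if $\vol(-K_X)$ were large one could build a boundary $D$ making two disjoint general fibers into \emph{good $F$-pure centers} of an anti-ample log pair; a weak connectedness statement, proved directly with trace maps and Serre/Fujita vanishing for $H^1(X,\cI_W\otimes(\text{ample}))$ rather than with KV vanishing, forbids two such centers. In the Picard-number-one case, one replaces ``isolated minimal lc center'' by an \emph{$F$-pure combination with isolated center}: three very singular divisors $D_1,D_2,D_3\sim_\bQ -K_X$ whose scheme-theoretic intersection is zero-dimensional at a very general point, constructed via global generation of test modules (a local version of the non-nef theorem) instead of via plt blowups. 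A second weak connectedness lemma, again proved by trace maps, then bounds $\vol(-K_X)$. So the missing idea in your proposal is the systematic replacement of lc centers and connectedness by $F$-pure centers, test modules, and trace-map arguments; without that replacement the proof does not go through.
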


In characteristic zero, the corresponding result \cite[Theorem 3.6]{LargeSeshadri} is proved using the Koll\'ar-Shokurov connectedness principle, whose validity remains open in positive characteristic (even for threefolds). Our proof strategy then, is to come up with several weaker versions of the Koll\'ar-Shokurov connectedness principle in positive characteristic, and construct a pair $(X,D)$ that violates one of these. On the other hand, by the work of \cite{mmp-hx,mmp-birkar,mmp-bw}, we may run the MMP for threefolds in characteristic $p>5$, and this easily reduces Theorem \ref{thm:weakbdd_m} to the case of Mori fiber spaces. Depending on the behaviour of the Mori fiber spaces, we have two cases to consider. If the Mori fiber space is of fiber type, we can prove that it is birational to a $\bP^2$-bundle over a curve and it is then relatively straightforward to find the desired boundary divisor $D$ using an argument similar to \cite{jiang}. The other case is when we have a terminal Fano variety of Picard number one. In this case, our argument relies heavily on the method of \cite{bpf} and the upshot is to find some highly singular divisors in the pluri-anticanonical system that cut out a zero dimension subscheme (which can be viewed as an analog of isolated log canonical center in characteristic zero) supported at a very general point. A key tool here is a local version (see Lemma \ref{lem:global generation}) of \cite[Theorem A]{non-nef}, which allows us to construct new singular divisors out of existing ones.

This paper is organized as follows. In Section \ref{sec:prelim}, we collect some definitions and useful results on $F$-singularity and moving Seshadri constants. We then use these results to reduce Theorem \ref{thm:weakbdd_m} to the case of Mori fiber spaces. In Section \ref{sec:fiber type}, we treat the case of strict Mori fiber spaces while in Section \ref{sec:rho=1}, we deal with the case of terminal Fano varieties of Picard number one.

\subsection*{Acknowledgement}
The author would like to thank his advisor J\'anos Koll\'ar for constant support, encouragement and numerous inspiring conversations. He also wishes to thank Yuchen Liu for helpful discussions.

\section{Preliminary} \label{sec:prelim}

\subsection{Notation and conventions}

Unless otherwise specified, all varieties are assumed to be normal and defined over an algebraically closed field $k$ of characteristic $p\ge0$. A \emph{pair} $(X,D)$ consists of a variety $X$ and an effective $\bQ$-divisor $D$ on $X$. A dominant morphism $f:X\rightarrow Y$ is called a \emph{fiber type} morphism if it has connected fibers and $0<\dim Y<\dim X$.

\subsection{$F$-singularities and test ideals} \label{sec:test_ideal}

We first recall a few definitions and results on singularities in characteristic $p$. All the statements in this section have appeared in the literature before, and we refer to \cite{ss} for more details on $F$-singularities and to \cite{test_ideal} for a general treatment of test ideals.

\begin{defn}
Let $X$ be a normal quasi-projective variety and $\Delta$ an effective $\bQ$-divisor on $X$. Fix a closed point $x\in X$. The pair $(X,\Delta)$ is called \emph{globally $F$-regular} (resp. \emph{globally sharply $F$-split}) if for all effective Weil divisor $D$ on $X$ (resp. for $D=0$), there exists an $e$ such that the composition
\begin{equation}\label{eq:f-reg}
\cO_X\rightarrow F_*^e\cO_X\hookrightarrow F_*^e\cO_X(\left\lceil (p^e-1)\Delta \right\rceil +D)
\end{equation}
splits as a map of $\cO_X$-modules. It is said to be \emph{strongly $F$-regular} (resp. \emph{sharply $F$-pure}) at $x$ if the pair is globally $F$-regular (resp. globally sharply $F$-split) in some affine neighbourhood of $x$.
\end{defn}

\begin{defn}
Let $L$ be an ample $\bQ$-divisor on $X$. Assume that $X$ is strongly $F$-regular, then the \emph{$F$-pure threshold} of the (polarized) pair $(X,L)$ is defined to be the supremum of all $t\ge0$ such that $(X,tD)$ is strongly $F$-regular for all effective $\bQ$-divisor $D\sim_\bQ L$. When $X$ is Fano, we define its $F$-pure threshold, denoted by fpt($X$), as the $F$-pure threshold of $(X,-K_X)$.
\end{defn}

By \cite[Lemma 3.4]{hw}, for any effective divisor $D$ on a normal variety $X$ and for any integer $e>0$, we have an isomorphism of $F_*^e\cO_X$-modules
\[\mathcal{H}om_{\cO_X}(F_*^e(\cO_X(D)),\cO_X)\cong F_*^e(\cO_X((1-p^e)K_X-D)).
\]
Viewing an element of $F_*^e(\cO_X((1-p^e)K_X-D))$ as a map $\theta:F_*^e(\cO_X(D))\rightarrow \cO_X$ and evaluating at $1\in F_*^e\cO_X \subseteq F_*^e(\cO_X(D))$, we obtain the trace map
\[\mathrm{Tr}_{X}^{e}(D):F_*^e(\cO_X((1-p^e)K_X-D))\rightarrow \cO_X.
\]
By abuse of notation, we will often denote $\mathrm{Tr}_{X}^{e}(D)$ simply by $\mathrm{Tr}_{X}^{e}$ or $\mathrm{Tr}^{e}$. It is quite straightforward to see that a pair $(X,\Delta)$ is sharply $F$-pure if and only if
\[\mathrm{Tr}_{X}^{e}:F_*^e(\cO_X(\lfloor (1-p^e)(K_X+\Delta)\rfloor))\rightarrow \cO_X
\]
is surjective for some $e>0$ (see e.g. \cite[Proposition 2.5]{bpf}).

Next we review the definition of test ideals on a smooth variety (this will be the only case we need). Our definition is taken from \cite{bms}. Let $X$ be a smooth variety and $\mathfrak{a}$ be an ideal sheaf on $X$. Let $e$ be a positive integer. Let $\mathfrak{a}^{[1/p^e]}$ denote the unique smallest ideal sheaf $\cJ$ such that $\mathfrak{a}\subseteq\cJ^{[p^e]}$. One can show that (see \cite[Proposition 2.5 and Lemma 2.8]{bms})
\[\mathrm{Tr}^e_X(F^e_*(\mathfrak{a}\cdot \omega_X))=\mathfrak{a}^{[1/p^e]}\cdot \omega_X
\]
and that if $c\ge 0$ then
\[(\mathfrak{a}^{\lceil cp^e \rceil})^{[1/p^e]}\subseteq (\mathfrak{a}^{\lceil cp^{e+1} \rceil})^{[1/p^{e+1}]}.
\]

\begin{defn}
Given $\mathfrak{a}$ and $c\ge0$ as above. The \emph{test ideal} of the pair $(X,\mathfrak{a}^c)$ is defined to be
\[\tau(X,\mathfrak{a}^c)=\bigcup_{e\ge0} (\mathfrak{a}^{\lceil cp^e \rceil})^{[1/p^e]}.
\]
\end{defn}

Since $\cO_X$ is Noetherian, this is well-defined and by the previous discussion we have \[\tau(X,\mathfrak{a}^c)\cdot \omega_X=\mathrm{Tr}^e_X(F^e_*(\mathfrak{a}^{\lceil cp^e \rceil}\cdot \omega_X))
\]
for $e\gg0$. If $\mathfrak{a}=\cO_X(-D)$ for some divisor $D\ge0$, we simply write $\tau(X,\mathfrak{a}^c)$ as $\tau(X,\Delta)$ where $\Delta=cD$.

The following property of test ideals is well known to expert. It is analogous to the corresponding property of multiplier ideals in characteristic zero.

\begin{lem} \label{lem:mult and test ideal}
Let $X$ be a smooth variety of dimension $n$. Let $x\in X$ and let $D$ be an effective $\bQ$-divisor on $X$.
\begin{enumerate}
    \item If $\mult_x D<1$, then the pair $(X,D)$ is strongly $F$-regular at $x$. In particular, $\tau(X,D)_x=\cO_{X,x}$.
    \item If $\mult_x D=N\ge n$, then $\tau(X,D)\subseteq\mathfrak{m}_x^{\lfloor N-n+1 \rfloor}$. In particular, $(X,D)$ is not strongly $F$-regular at $x$.
\end{enumerate}
\end{lem}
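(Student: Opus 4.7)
Both parts are local at $x$, so I reduce to working in the regular local ring $R=\cO_{X,x}$ with regular parameters $x_1,\ldots,x_n$ trivializing $\omega_X$ and maximal ideal $\mathfrak{m}$. Write $D=cD'$ with $D'$ effective integral and $c\in\bQ_{>0}$, so that $\cO_X(-D')$ is locally principal, generated by some $f\in R$ with $\mathfrak{m}$-adic order $m=\mult_x D'$. Then $\tau(X,D)=\bigcup_{e\ge 0}(f^{M_e})^{[1/p^e]}$ with $M_e=\lceil cp^e\rceil$. The common engine for both parts is the explicit formula for the trace in coordinates: $\mathrm{Tr}^e(F_*^e(x^\alpha\, dx_1\wedge\cdots\wedge dx_n))$ vanishes unless every $\alpha_i\equiv -1\pmod{p^e}$, in which case, writing $\alpha_i=p^ek_i-1$, it equals $x_1^{k_1-1}\cdots x_n^{k_n-1}\, dx_1\wedge\cdots\wedge dx_n$. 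The immediate consequence is the order estimate $(g)^{[1/p^e]}\subseteq\mathfrak{m}^{\lceil(d+n)/p^e\rceil-n}$ whenever $g\in R$ has $\mathfrak{m}$-adic order $d$, since any monomial $x^\alpha$ occurring in the output of $\mathrm{Tr}^e(F_*^e(gh\cdot\omega_X))$ satisfies $|\alpha|=p^e|k|-n\ge d$.

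For part (2), I apply this to $g=f^{M_e}$ with $d=M_em\ge cmp^e=Np^e$, obtaining $\mathrm{ord}_\mathfrak{m}\,(f^{M_e})^{[1/p^e]}\ge\lceil N+n/p^e\rceil-n$. A short case analysis separating $N\in\bZ$ from $N\notin\bZ$ shows that for every $e\ge 1$ this lower bound is at least $\lfloor N-n+1\rfloor$ (if $N$ is an integer and $n/p^e>0$ then the ceiling equals $N+1$; if $N$ is not an integer the ceiling is $\lceil N\rceil=\lfloor N\rfloor+1$). Since the union defining $\tau(X,D)$ is increasing, it is itself contained in $\mathfrak{m}^{\lfloor N-n+1\rfloor}$. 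As $N\ge n$ implies $\lfloor N-n+1\rfloor\ge 1$, $(X,D)$ fails to be strongly $F$-regular at $x$.

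For part (1), my plan is induction on $n$. In dimension one we may take $f=t^m$ up to a unit, and then $(f^{M_e})^{[1/p^e]}=(t^{\lfloor M_em/p^e\rfloor})$; the hypothesis $cm<1$ forces $M_em<p^e$ for all $e\gg 0$, so this equals $R$. For the inductive step I would cut with a general smooth Cartier divisor $H\ni x$: $D|_H$ is then a well-defined effective $\bQ$-divisor on $H$ with $\mult_x(D|_H)\le\mult_x D<1$, so by induction $(H,D|_H)$ is strongly $F$-regular at $x$, and $F$-adjunction on the smooth pair $(X,H)$ (see \cite{test_ideal,ss}) lifts this to strong $F$-regularity of $(X,D+H)$, and hence of the smaller pair $(X,D)$, at $x$.

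The main obstacle I foresee is the inversion-of-adjunction step in (1), which in positive characteristic is significantly more delicate than its characteristic zero counterpart and requires care in invoking the existing $F$-adjunction machinery; the combinatorial bookkeeping behind (2), by contrast, is entirely elementary once the trace formula is in hand.
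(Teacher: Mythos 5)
Your part (2) is correct and complete: the explicit coordinate formula for the trace, the resulting order estimate $(g)^{[1/p^e]}\subseteq\mathfrak{m}^{\lceil(d+n)/p^e\rceil-n}$, and the case analysis on whether $N$ is an integer all check out. This is actually more self-contained than the paper, which simply cites \cite[Proposition 3.3]{non-nef} for this part.

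Part (1) is where the gap lies, and it is twofold. First, the step ``$F$-adjunction lifts this to strong $F$-regularity of $(X,D+H)$'' is false as stated: a pair whose boundary contains a reduced component through $x$ is never strongly $F$-regular at $x$ (strong $F$-regularity forces $\lfloor\Delta\rfloor=0$). What inversion of adjunction \`a la Schwede/Das actually gives is \emph{pure} (divisorial) $F$-regularity of $(X,D+H)$ along $H$; one can then deduce strong $F$-regularity of $(X,D)$ near $x$ by splitting off multiples of $H$ from an arbitrary test divisor $E$, but that deduction is missing, and you yourself flag the whole adjunction step as an unresolved obstacle. Second, this machinery is unnecessary: the hypothesis $\mult_xD<1$ gives the conclusion directly from the definition of the test ideal. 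Writing $D=cD'$ with $\cO_X(-D')=(f)$ near $x$ and $m=\mult_xD'$, one has $\mathrm{ord}_x(f^{\lceil cp^e\rceil})=\lceil cp^e\rceil m<p^e$ for $e\gg0$ since $cm<1$; as $\mathfrak{m}_x^{[p^e]}\subseteq\mathfrak{m}_x^{p^e}$, this means $f^{\lceil cp^e\rceil}\notin\mathfrak{m}_x^{[p^e]}$, hence $((f^{\lceil cp^e\rceil}))^{[1/p^e]}\not\subseteq\mathfrak{m}_x$ and so $\tau(X,D)_x=\cO_{X,x}$. The same computation applies to $(X,(1+\epsilon)D)$ for $0<\epsilon\ll1$, which is therefore sharply $F$-pure at $x$, and \cite[Theorem 3.9]{ss} then upgrades this to strong $F$-regularity of $(X,D)$ at $x$. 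This is the paper's argument; I recommend replacing your induction by it, or at minimum correcting ``strongly $F$-regular'' to ``purely $F$-regular along $H$'' and supplying the final descent to $(X,D)$.
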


\begin{proof}
(2) is a consequence of \cite[Proposition 3.3]{non-nef}, so we only need to prove (1). If $\cO_X(-\lceil p^e D \rceil)\subseteq \mathfrak{m}_x^{[p^e]}$ for $e\gg0$ then $\mult_x D\ge 1$, hence if $\mult_x D<1$ then $\tau(X,D)_x=\cO_{X,x}$ by definition. If follows that $(X,(1+\epsilon)D)$ is sharply $F$-pure for some $0<\epsilon\ll 1$. By \cite[Theorem 3.9]{ss}, this implies that $(X,D)$ is strongly $F$-regular at $x$.
\end{proof}

\begin{cor} \label{cor:fpt-Pn}
Let $X=\bP^n$ and $D$ an effective $\bQ$-divisor of degree $<1$ on $X$. Then $(X,D)$ is globally $F$-regular.
\end{cor}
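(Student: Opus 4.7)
The plan is to reduce global $F$-regularity of $(\bP^n, D)$ to a multiplicity computation at the vertex of the affine cone, where Lemma~\ref{lem:mult and test ideal}(1) applies directly. Concretely, I would pass to the section ring $R = k[x_0, \dots, x_n] = \bigoplus_{k\ge 0} H^0(\bP^n, \cO(k))$ of $\bP^n$ with respect to the hyperplane class, view $\Spec R = \bA^{n+1}$ with its natural $\bG_m$-action, and let $D_R$ denote the $\bG_m$-invariant $\bQ$-divisor on $\bA^{n+1}$ arising as the affine cone over $D$.

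The multiplicity of an affine cone at its vertex equals the projective degree of the base, so $\mult_0 D_R = \deg D < 1$. Applying Lemma~\ref{lem:mult and test ideal}(1) on the smooth variety $\bA^{n+1}$ shows that $(\bA^{n+1}, D_R)$ is strongly $F$-regular at the vertex $0$. I would then observe that the strongly $F$-regular locus is open, and because $D_R$ is $\bG_m$-invariant this locus is also $\bG_m$-stable; since every $\bG_m$-orbit closure in $\bA^{n+1}$ contains $0$, any $\bG_m$-invariant open set containing $0$ must be all of $\bA^{n+1}$. Hence $(\bA^{n+1}, D_R)$ is strongly $F$-regular everywhere, i.e., the graded pair $(R, D_R)$ is strongly $F$-regular.

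To finish, I would invoke the Schwede--Smith correspondence between strong $F$-regularity of the section ring pair $(R, D_R)$ with respect to an ample divisor and global $F$-regularity of the projective pair $(\bP^n, D)$, which yields the desired conclusion.

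The main obstacle is this final step: strong $F$-regularity at every point of $\bP^n$ alone is insufficient for global $F$-regularity, because the definition requires splittings against arbitrary effective Weil test divisors; the graded strong $F$-regularity of the section ring is precisely what encodes these global splittings, and appealing to the Schwede--Smith correspondence is the cleanest way to cross this gap.
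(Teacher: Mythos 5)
Your proof is correct and follows exactly the paper's intended argument: the paper's entire proof is ``take the cone over $(X,D)$ and apply Lemma~\ref{lem:mult and test ideal}(1),'' and you have simply filled in the standard details (multiplicity of the cone at the vertex equals the degree, homogeneity propagates strong $F$-regularity from the vertex, and the Schwede--Smith section-ring correspondence converts this into global $F$-regularity of the projective pair). No gaps.
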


\begin{proof}
This follows from Lemma \ref{lem:mult and test ideal}(1) by taking the cone over $(X,D)$.
\end{proof}

\subsection{Moving Seshadri constants}

We now recall some results on moving Seshadri constants and carry out the reduction step of Theorem \ref{thm:weakbdd_m}. Essentially all the results in this section are contained in \cite{LargeSeshadri} as the same proofs there work in any characteristic.

\begin{defn} \label{defn:moving Seshadri} \cite{nakamaye,elmn}
Let $D$ be a $\bQ$-divisor on a normal variety $X$. Let $x\in X$ be a smooth point. The moving Seshadri constant of $D$ at $x$ is defined to be 
\[\epsilon_m(D,x)=\limsup_k\frac{s(kD,x)}{k}
\]
where the limit is taken over sufficiently large and divisible integers $k$ and $s(kD,x)$ is the largest integer $s\ge -1$ such that the natural map
\[H^{0}(X,\cO_X(kD))\rightarrow H^{0}(X,\cO_X(kD)\otimes\cO_X/\mathfrak{m}_{x}^{s+1})
\]
is surjective. We also define
\[\epsilon_m(D)=\sup_{x\in X^{\circ}}\epsilon_m(D,x)
\]
where $X^\circ$ is the smooth locus of $X$. It is not hard to see that the supremum is actually a maximum and is achieved at a very general point of $X$.
\end{defn}

Here are some of the basic properties of moving Seshadri constants.

\begin{lem} \label{lem:restrict_m} 
Let $L$ be a $\bQ$-Cartier $\bQ$-divisor on $X$. Let $Y$ be a positive dimensional subvariety of $X$ and $x$ a smooth point of both $X$ and $Y$. Then $\epsilon_m(L,x)\le\epsilon_m(L|_Y,x)$.
\end{lem}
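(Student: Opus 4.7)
The plan is to unpack the definition of the moving Seshadri constant and reduce the inequality to a direct comparison of the jet separation integers $s(kL,x)$ and $s(kL|_Y,x)$ for $k$ sufficiently large and divisible. Specifically, I will show $s(kL|_Y,x)\ge s(kL,x)$ for all such $k$, which upon dividing by $k$ and taking $\limsup$ gives the claim.

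To produce this inequality, fix $k$ large and divisible so that $kL$ is an honest Cartier divisor and restricts to the Cartier divisor $kL|_Y$ on $Y$. Since $x$ is a closed point of both $X$ and $Y$ with common residue field $k$, the surjection $\cO_X\twoheadrightarrow \cO_Y$ identifies the maximal ideal of $x$ on $Y$ with $\mathfrak{m}_x\cdot \cO_Y$, and hence induces a surjection of length-$(s+1)$ jets
\[
\cO_X(kL)\otimes \cO_X/\mathfrak{m}_x^{s+1}\twoheadrightarrow \cO_Y(kL|_Y)\otimes \cO_Y/\mathfrak{m}_{x,Y}^{s+1}
\]
for every $s\ge -1$. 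Taking global sections and combining with restriction, I obtain the commutative square
\[
\begin{array}{ccc}
H^0(X,\cO_X(kL)) & \longrightarrow & H^0(X,\cO_X(kL)\otimes\cO_X/\mathfrak{m}_x^{s+1}) \\
\bigg\downarrow & & \bigg\downarrow \\
H^0(Y,\cO_Y(kL|_Y)) & \longrightarrow & H^0(Y,\cO_Y(kL|_Y)\otimes\cO_Y/\mathfrak{m}_{x,Y}^{s+1})
\end{array}
\]
in which the right vertical arrow is surjective (since $\cO_X/\mathfrak{m}_x^{s+1}$ already has zero-dimensional support concentrated at $x$, and similarly on $Y$).

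Now set $s=s(kL,x)$, so by definition the top horizontal arrow is surjective. Chasing the square, the composition going first right, then down is surjective; but this composition also equals the composition going first down (restriction), then right. Therefore the bottom horizontal arrow must be surjective, giving $s(kL|_Y,x)\ge s(kL,x)$.

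Dividing through by $k$ and taking $\limsup$ over sufficiently large and divisible $k$ yields $\epsilon_m(L|_Y,x)\ge \epsilon_m(L,x)$. I do not expect any real obstacle: the only minor points are (i) ensuring $kL$ is integral Cartier so that restriction to $Y$ makes sense and (ii) using that $\mathfrak{m}_x\cdot \cO_{Y,x}=\mathfrak{m}_{x,Y}$ so that the surjection of jets is compatible, both of which are immediate under the hypotheses. The proof is essentially a formal diagram chase, completely parallel to the characteristic zero argument, and uses no vanishing theorem.
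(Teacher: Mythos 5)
Your argument is correct: the termwise inequality $s(kL|_Y,x)\ge s(kL,x)$ via the commutative square of jet-evaluation maps (using that the right vertical arrow is a surjection of sheaves supported at $x$) is exactly the standard proof. The paper itself only cites \cite[Lemma 3.1]{LargeSeshadri}, and the argument there is the same jet-separation diagram chase, so your proof matches the intended one.
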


\begin{proof}
See \cite[Lemma 3.1]{LargeSeshadri}.
\end{proof}

\begin{lem} \label{lem:nondecrease}
Let $\phi:X\dashrightarrow Y$ be a birational contraction between normal varieties, then $\epsilon_{m}(-K_{X})\le\epsilon_{m}(-K_{Y})$ and $\vol(-K_X)\le\vol(-K_Y)$.
\end{lem}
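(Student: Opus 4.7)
The plan is to reduce both inequalities to a single underlying construction: for every $m\ge 0$, the birational contraction $\phi$ induces a natural injection
$$\iota_m:H^0(X,\cO_X(-mK_X))\hookrightarrow H^0(Y,\cO_Y(-mK_Y)).$$
Once $\iota_m$ is in hand, the volume comparison is immediate from the asymptotic formula $\vol(-K_X)=\limsup_m n!\,h^0(X,-mK_X)/m^n$ with $n=\dim X$.

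To construct $\iota_m$, I would fix a nonzero rational $(-m)$-pluricanonical form $\omega$ on the common function field $k(X)=k(Y)$, and identify global sections of $\cO_X(-mK_X)$ (resp.\ $\cO_Y(-mK_Y)$) with rational functions $f$ satisfying $\mathrm{ord}_E(f)\ge m\cdot\mathrm{ord}_E(K_X)$ for every prime divisor $E\subset X$ (resp.\ the corresponding inequality on $Y$), where $K_X$ and $K_Y$ are computed from $\omega$. Because $\phi$ is a birational contraction, every prime divisor $D\subset Y$ is the strict transform of a prime divisor $D'\subset X$, and the valuation $\mathrm{ord}_D=\mathrm{ord}_{D'}$ on $k(X)=k(Y)$ assigns the same order to $\omega$. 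Hence the inequalities required for $f$ on $X$ imply those required on $Y$, yielding the desired $\iota_m$.

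For the Seshadri bound, I would pick a very general smooth point $x\in X$ at which the supremum defining $\epsilon_m(-K_X)$ is attained (as guaranteed by Definition \ref{defn:moving Seshadri}) and additionally lying in the isomorphism locus of $\phi$; then $y=\phi(x)$ is also smooth. The canonical identification $\cO_{X,x}\cong\cO_{Y,y}$ combined with $\iota_m$ gives a commutative square whose horizontal arrows are the jet maps to the stalk modulo $\mathfrak{m}^{s+1}$. Surjectivity on the $X$-side therefore forces surjectivity on the $Y$-side, so $s(-mK_X,x)\le s(-mK_Y,y)$ for all $m$. Passing to the limsup gives $\epsilon_m(-K_X)=\epsilon_m(-K_X,x)\le\epsilon_m(-K_Y,y)\le\epsilon_m(-K_Y)$.

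The only genuine content is the construction of $\iota_m$, and I do not expect a real obstacle: the defining property of a birational contraction---every prime divisor on $Y$ is the strict transform of a prime divisor on $X$---is precisely what is needed for the divisor-order bookkeeping to go through.
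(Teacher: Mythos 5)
Your proposal is correct and follows essentially the same route as the paper: the volume inequality comes from the injection $H^0(X,-mK_X)\hookrightarrow H^0(Y,-mK_Y)$ given by the divisor-order bookkeeping for a birational contraction, and the Seshadri inequality comes from the compatibility of this injection with the jet maps at a very general point of the isomorphism locus, which is precisely the content of the cited \cite[Corollary 3.3]{LargeSeshadri}.
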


\begin{proof}
The first statement is simply \cite[Corollary 3.3]{LargeSeshadri} while the second follows from the injection $H^0(X,-mK_X)\rightarrow H^0(Y,-mK_Y)$ induced by $\phi$.
\end{proof}

\begin{defn} \cite[Theorem 1.33]{mmp}
A projective birational morphism $\phi:Y\rightarrow X$ is called a \emph{terminal modification} of $X$ if $Y$ is $\bQ$-factorial, terminal and $K_Y$ is $\phi$-nef.
\end{defn}

The existence of terminal modification is a formal consequence of the MMP, so by the work of \cite{mmp-hx,mmp-birkar,mmp-bw}, terminal modification exists for threeholds in characteristic $p>5$.

\begin{lem} \label{lem:mmodel}
Let $\phi:Y\rightarrow X$ be a terminal modification of $X$, then $\epsilon_m(-K_X)=\epsilon_m(-K_Y)$ and $\vol(-K_X)=\vol(-K_Y)$.
\end{lem}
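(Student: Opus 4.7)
The plan is to exhibit a very mild relation $-K_Y = \phi^*(-K_X) + E$ with $E$ an effective, $\phi$-exceptional $\bQ$-divisor, and then push both invariants down from $Y$ to $X$ through this decomposition. With such a relation in hand, the projection formula gives $\phi_*\cO_Y(-mK_Y) = \cO_X(-mK_X)$ for every $m$ divisible enough to clear denominators, and both equalities follow formally: the equality of global sections delivers the equality of volumes at once, and matching jets at a very general point (which lies outside the exceptional locus) delivers the equality of moving Seshadri constants.

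The key input is the negativity lemma. Writing $K_Y - \phi^*K_X = \sum a_i E_i$ as a sum over $\phi$-exceptional prime divisors, the hypothesis that $K_Y$ is $\phi$-nef forces $\sum a_iE_i$ to be $\phi$-nef and $\phi$-exceptional; the negativity lemma then gives $a_i\le 0$ for every $i$, so $E:=-\sum a_iE_i\ge 0$ is the required effective exceptional divisor. Choosing $m$ divisible by a common Cartier index of $K_X$ and $K_Y$, we have $\phi_*\cO_Y(mE)=\cO_X$ (because $X$ is normal and $mE$ is effective and $\phi$-exceptional), and the projection formula upgrades this to $\phi_*\cO_Y(-mK_Y)=\cO_X(-mK_X)$. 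Taking global sections and passing to the leading asymptotic coefficient yields $\vol(-K_Y)=\vol(-K_X)$ immediately.

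For the moving Seshadri constants, I would use that $\epsilon_m(-K_X)$ is attained at a very general smooth point $x\in X$, which may be chosen outside the proper closed subset $\phi(\Ex(\phi))\subset X$. Then $y:=\phi^{-1}(x)$ is a single smooth point of $Y$, $\phi$ restricts to an isomorphism of formal neighborhoods of $y$ and $x$, and the identification $\phi_*\cO_Y(-mK_Y)=\cO_X(-mK_X)$ is compatible with the natural surjections onto $\cO/\mathfrak{m}_x^{s+1}$ and $\cO/\mathfrak{m}_y^{s+1}$. Thus $s(-mK_Y,y)=s(-mK_X,x)$ for all $m$, so $\epsilon_m(-K_Y,y)=\epsilon_m(-K_X,x)=\epsilon_m(-K_X)$, giving $\epsilon_m(-K_Y)\ge \epsilon_m(-K_X)$. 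The reverse inequality comes either from Lemma \ref{lem:nondecrease} applied to the birational contraction $\phi:Y\dashrightarrow X$ (its inverse is a morphism, so it does not contract divisors), or by running the same argument starting with a very general point of $Y$.

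The only substantive issue is checking $\phi_*\cO_Y(-mK_Y)=\cO_X(-mK_X)$ carefully when the canonical classes are merely $\bQ$-Cartier: one has to work at a common index and invoke reflexivity on the normal variety $X$ to identify sheaves that agree on a big open subset. All remaining steps are bookkeeping around the negativity lemma and the behavior of moving Seshadri constants at very general points.
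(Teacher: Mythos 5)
Your overall strategy --- produce an effective $\phi$-exceptional $E$ with $-K_Y=\phi^*(-K_X)+E$, deduce $H^0(Y,\cO_Y(-mK_Y))=H^0(X,\cO_X(-mK_X))$, and compare jets at very general points lying outside $\Ex(\phi)$ --- is essentially the paper's, and everything downstream of the decomposition is fine. The gap is at the decomposition itself: forming $\phi^*K_X$ presupposes that $K_X$ is $\bQ$-Cartier, and nothing grants this. The definition of terminal modification used here places no hypothesis on $X$, and the lemma is invoked in Lemma \ref{lem:reduction} for an arbitrary normal threefold $X$ satisfying the hypothesis of Theorem \ref{thm:weakbdd_m}, where $K_X$ need not be $\bQ$-Cartier (a terminal modification still exists: resolve and run a relative MMP). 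Your closing caveat only addresses the case where $K_X$ is $\bQ$-Cartier of index $>1$, not the case where it fails to be $\bQ$-Cartier at all; in the latter case the negativity lemma cannot even be set up as you state it.

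The paper's workaround is to never pull back $K_X$ itself but rather the linearly trivial (hence Cartier) divisor $mK_X+D_X\sim0$ for each individual $D_X\in|-mK_X|$: writing $mK_Y+D_Y+E_Y\sim\phi^*(mK_X+D_X)\sim0$ with $D_Y$ the strict transform and $E_Y$ exceptional, one applies the negativity lemma in the form ``if $-B$ is $\phi$-nef and $\phi_*B\ge0$ then $B\ge0$'' to $B=D_Y+E_Y$ (note $-B\equiv_\phi mK_Y$ is $\phi$-nef by hypothesis), and then separates exceptional from non-exceptional components to conclude $E_Y\ge0$; this is the content of the cited \cite[Lemma 2.5]{LargeSeshadri}. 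That produces the injection $H^0(X,\cO_X(-mK_X))\hookrightarrow H^0(Y,\cO_Y(-mK_Y))$ section by section, with the reverse inclusion coming from $\phi_*$ exactly as you say. If you add the hypothesis that $K_X$ is $\bQ$-Cartier --- which holds in the application to Theorem \ref{thm:weakbdd}, where $X$ is Fano, but not in the generality of Theorem \ref{thm:weakbdd_m} --- then your argument is complete and agrees with the paper's.
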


\begin{proof}
The first equality follows from \cite[Lemma 3.4]{LargeSeshadri}. To see the second equality, let $D_X\in|-mK_X|$ and $D_Y$ its strict transform on $Y$. We may write $mK_Y+D_Y+E_Y\sim\phi^*(mK_X+D_X)\sim 0$ for some $\phi$-exceptional divisor $E_Y$. Note that $E_Y$ has integral coefficients. Apply \cite[Lemma 2.5]{LargeSeshadri} to the pair $(X,\frac{1}{m}D)$ we see that $E_Y$ is effective. It follows that $D_Y+E_Y\in|-mK_Y|$ and we have an injection $\phi^{-1}_*:H^0(X,-mK_X)\rightarrow H^0(Y,-mK_Y)$. On the other hand $\phi_*$ also induces an inclusion $H^0(Y,-mK_Y)\rightarrow H^0(X,-mK_X)$ and $\phi_*\circ\phi^{-1}_*=\mathrm{id}$, hence it's an isomorphism and $\vol(-K_X)=\vol(-K_Y)$.
\end{proof}

Using these properties we can easily reduce the proof of Theorem \ref{thm:weakbdd_m} to the case of Mori fiber spaces.

\begin{defn}
Let $X$ be a normal variety and $f:X\rightarrow Y$ a projective morphism with $f_*\cO_X=\cO_Y$. Then $f$ is called a \emph{Mori fiber space} if
\begin{enumerate}
    \item $X$ has $\bQ$-factorial terminal singularities,
    \item the relative Picard number $\rho(X/Y)=1$, and
    \item $-K_X$ is $f$-ample.
\end{enumerate}
\end{defn}

\begin{lem} \label{lem:reduction}
It suffices to prove Theorem \ref{thm:weakbdd_m} when the threefold $X$ admits a Mori fiber space structure.
\end{lem}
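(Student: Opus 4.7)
The plan is to combine Lemma \ref{lem:mmodel} with a run of the minimal model program. Given a threefold $X$ satisfying the hypothesis of Theorem \ref{thm:weakbdd_m}, I would first pass to a terminal modification $\phi:Y\to X$, which exists by \cite{mmp-hx,mmp-birkar,mmp-bw}. Lemma \ref{lem:mmodel} preserves both invariants, so $\epsilon_m(-K_Y)=\epsilon_m(-K_X)>2+\epsilon$ and $\vol(-K_Y)=\vol(-K_X)$. By Definition \ref{defn:moving Seshadri}, the supremum defining $\epsilon_m(-K_Y)$ is attained at a very general (hence smooth) point of $Y$, so the hypothesis that the moving Seshadri constant is large at \emph{some smooth point} is automatically inherited.

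Next, since $\epsilon_m(-K_Y)>0$ forces $-K_Y$ to be big, $K_Y$ is not pseudo-effective, so any $K_Y$-MMP (again available by \cite{mmp-hx,mmp-birkar,mmp-bw}) terminates with a Mori fiber space $X'\to S$. Each intermediate step is either a divisorial contraction or a flip, both of which are birational contractions between $\bQ$-factorial terminal threefolds. Iterating Lemma \ref{lem:nondecrease} then gives $\epsilon_m(-K_{X'})\ge\epsilon_m(-K_Y)>2+\epsilon$ and $\vol(-K_{X'})\ge\vol(-K_Y)=\vol(-K_X)$. The same very-general-point observation produces a smooth point $x'\in X'$ with $\epsilon_m(-K_{X'},x')>2+\epsilon$, so $X'$ satisfies the hypothesis of Theorem \ref{thm:weakbdd_m}; granting the theorem for Mori fiber spaces yields $\vol(-K_{X'})<M=M(\epsilon)$ and hence $\vol(-K_X)\le\vol(-K_{X'})<M$, as required.

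There is no substantive obstacle in this reduction: the argument is a standard MMP--functoriality package whose inputs are precisely (i) the existence of terminal modifications and of the $K$-MMP for threefolds in characteristic $p>5$, (ii) the invariance of $\epsilon_m$ and $\vol$ under terminal modifications (Lemma \ref{lem:mmodel}), and (iii) their monotonicity under birational contractions (Lemma \ref{lem:nondecrease}). The only mildly delicate point is ensuring that the ``at a smooth point'' condition survives each terminal modification and each MMP step; this is exactly the reason the paper phrases things in terms of $\epsilon_m(-K_\bullet)$, which is realized on a dense open subset of smooth points of the target. All of the real work of the paper will be in the two Mori fiber space cases analyzed in the following sections.
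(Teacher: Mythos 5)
Your proposal is correct and follows the same route as the paper: terminal modification, then a $K_Y$-MMP terminating in a Mori fiber space because $K_Y$ is not pseudoeffective, with Lemmas \ref{lem:mmodel} and \ref{lem:nondecrease} transporting $\epsilon_m(-K_\bullet)$ and $\vol(-K_\bullet)$ through the process. The extra remarks about the smooth-point condition being realized at a very general point are a harmless elaboration of what the paper leaves implicit.
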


\begin{proof}
Let $Y\rightarrow X$ be a terminal modification of $X$ and run the $K_Y$-MMP on $Y$. Since $K_X$ is not pseudoeffective by assumption, the MMP ends with $Y\dashrightarrow Y_1$ where $Y_1$ admits a Mori fiber space structure. By Lemma \ref{lem:nondecrease} and \ref{lem:mmodel}, we have $\epsilon_m(-K_{Y_1})\ge\epsilon_m(-K_X)$ and $\vol(-K_{Y_1})\ge\vol(-K_X)$. Thus if Theorem \ref{thm:weakbdd_m} holds for Mori fiber spaces then it holds for all threefolds as well.
\end{proof}

\section{Fiber type case} \label{sec:fiber type}

We first look at the case when the Mori fiber space $f:X\rightarrow Y$ is of fiber type, which will be assumed for all Mori fiber spaces is this section. Consider the following assumptions on a variety $X$ of dimension $n$:

\begin{ass} \label{ass:raylength}
There exists a constant $A=A(n,\epsilon)>0$ that only depends on $n$ and $\epsilon$ such that every $K_X$-\emph{positive} extremal ray of $X$ is generated by a curve $C$ with $(K_X\cdot C)<A$.
\end{ass}

\begin{ass} \label{ass:seshadri_bound}
$\epsilon_{m}(-K_{X})>n-1+\epsilon$.
\end{ass}

Our plan in this section is to show that the set of Mori fiber spaces $f:X\rightarrow Y$ that satisfy these two assumptions is weakly bounded. Using suitable cone theorem for threefold pairs in positive characteristics, we find that \ref{ass:raylength} is implied by \ref{ass:seshadri_bound} in dimension 3, thus proving Theorem \ref{thm:weakbdd_m} in the case of Mori fiber spaces.

We first deduce some direct consequences of \ref{ass:raylength} and \ref{ass:seshadri_bound}.

\begin{lem} \label{lem:F and Y}
Let $f:X\rightarrow Y$ be a Mori fiber space such that $X$ satisfies \ref{ass:seshadri_bound}, then the general fiber $F$ of $f$ is reduced and isomorphic to $\bP^{n-1}$, $\dim Y=1$ and $\rho(X)=2$.
\end{lem}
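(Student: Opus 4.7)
The strategy is to transfer the moving Seshadri constant bound on $X$ to a general fiber of $f$ using Lemma \ref{lem:restrict_m}, and then to invoke the positive characteristic $\bP^n$-characterization of \cite{Pn_char_p} (the analog of Theorem \ref{thm:char 0}(1)) to force the fiber to be a projective space of dimension $n-1$.

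First I would pick a very general smooth point $x\in X$; such a point exists because $X$ is terminal (hence regular in codimension two), and by Definition \ref{defn:moving Seshadri} the value $\epsilon_m(-K_X)$ is attained at any very general point, so $\epsilon_m(-K_X,x)>n-1+\epsilon$ by Assumption \ref{ass:seshadri_bound}. Let $F$ be the fiber of $f$ through $x$ and set $d=\dim F=n-\dim Y$; since $f$ is of fiber type, $1\le d\le n-1$. Assuming for the moment that $F$ is reduced and smooth at $x$, the divisor $-K_F=-K_X|_F$ is ample on $F$ ($-K_X$ being $f$-ample), so Demailly's theorem gives $\epsilon_m(-K_F,x)=\epsilon(-K_F,x)$ and Lemma \ref{lem:restrict_m} yields
\[
\epsilon(-K_F,x)\ge \epsilon_m(-K_X,x)>n-1+\epsilon>d.
\]
The main theorem of \cite{Pn_char_p} then forces $F\cong\bP^d$, whence $\epsilon(-K_F,x)=d+1$; feeding this back in gives $d+1>n-1+\epsilon$, and as $\epsilon>0$ and $d\in\bZ$ this pins down $d=n-1$. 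Therefore $F\cong\bP^{n-1}$, $\dim Y=1$, and $\rho(X)=\rho(X/Y)+\rho(Y)=1+1=2$.

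The main obstacle is that in characteristic $p$ a general fiber $F$ need not be reduced (if $f$ is inseparable) nor smooth at $x$, so Lemma \ref{lem:restrict_m} does not a priori apply to $F$ itself. I would dispose of this by first running the argument above with $F$ replaced by an irreducible component $F_0$ of $F_{\mathrm{red}}$ through $x$: since the fibers of $f$ sweep out $X$ and the smooth locus of $F_0$ is open and dense, a very general $x$ lies in the smooth locus of $F_0$, and the restriction and $\bP^n$-characterization steps apply verbatim to yield $F_0\cong\bP^{n-1}$ and $\dim Y=1$. A short multiplicity/adjunction argument, using that $F$ is numerically a general fiber $f^*y$ over the (necessarily smooth) curve $Y$ and that $F_0\cong\bP^{n-1}$ together with its trivial normal bundle determines the scheme structure uniquely in its linear equivalence class, then rules out any inseparable multiplicity and shows $F=F_0$ is reduced.
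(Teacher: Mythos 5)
Your overall strategy --- restrict the moving Seshadri constant to the fiber via Lemma \ref{lem:restrict_m}, invoke the positive-characteristic $\bP^n$-characterization of \cite{Pn_char_p}, and then pin down $\dim F=n-1$ by comparing with $\epsilon(-K_{\bP^d})=d+1$ --- is exactly the paper's, and that part is fine. The genuine gap is in how you handle the pathologies of general fibers in characteristic $p$, which you correctly identify as the main obstacle but do not resolve. Lemma \ref{lem:restrict_m} controls $\epsilon_m(-K_X|_{F_0},x)$, whereas the $\bP^n$-characterization needs the Seshadri constant of an anticanonical-type divisor intrinsic to the fiber. These differ by an adjunction correction term, and in characteristic $p$ the general fiber can be non-normal and even non-reduced, so there is no generic-smoothness shortcut: one needs the nontrivial adjunction result \cite[Theorem 1.1]{general_fiber}, which says that on the normalization $\tF$ of $F_{\mathrm{red}}$ one has $K_{\tF}+D\sim K_X|_{\tF}$ for some \emph{effective} Weil divisor $D$ encoding both the conductor and the non-reducedness. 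Your claim that ``the restriction and $\bP^n$-characterization steps apply verbatim'' to a component $F_0$ of $F_{\mathrm{red}}$ silently assumes this comparison; without it the required lower bound on the fiber's Seshadri constant is unjustified. The paper instead applies the log version of the characterization (\cite[Theorem 3]{Pn_char_p}) to the pair $(\tF,D)$ with $\epsilon(-K_{\tF}-D)=\epsilon(-K_X|_{\tF})>n-1\ge\dim\tF$, concluding simultaneously that $\tF\cong\bP^{n-1}$, $\dim Y=1$, and $D=0$.

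Your final step --- deducing reducedness because ``$F_0\cong\bP^{n-1}$ together with its trivial normal bundle determines the scheme structure uniquely in its linear equivalence class'' --- is not a valid argument: if $F=mF_0$ then $\cO_X(F_0)|_{F_0}$ is only $m$-torsion rather than trivial, and the scheme structure of a non-reduced fiber is not determined by the linear equivalence class of the underlying cycle; wild (inseparable) fibrations are precisely the phenomenon that must be excluded. The correct mechanism is the vanishing $D=0$ obtained above: since the components of $D$ have integral coefficients, $D=0$ forces $F$ to be reduced with $(K_X+F)|_{\tF}=K_{\tF}$ by \cite[Theorem 7.1, Lemma 7.2]{Badescu}, and then $F$ is normal by \cite[Theorem A]{das}, hence $F\cong\tF\cong\bP^{n-1}$. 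So the architecture of your proof is right, but the two steps you compress into ``apply verbatim'' and ``a short multiplicity/adjunction argument'' are exactly where the characteristic-$p$ content of the lemma lives.
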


\begin{proof}
Let $\tF$ be the normalization of the reduced subscheme of $F$. By \cite[Theorem 1.1]{general_fiber}, there exists an effective Weil divisor $D$ on $\tF$ such that $K_{\tF}+D\sim K_X|_{\tF}$. Since $f:X\rightarrow Y$ is a Mori fiber space, $-K_X|_{\tF}$ is ample, hence by \ref{ass:seshadri_bound} and Lemma \ref{lem:restrict_m}, we have $\epsilon(-K_{\tF}-D)\ge\epsilon_m(-K_X)>n-1\ge \dim \tF$. By \cite[Theorem 3]{Pn_char_p}, this implies that $\tF\cong\bP^{n-1}$, $\dim Y=1$ and $D=0$ (note that components of $D$ have integral coefficients). But then by \cite[Theorem 7.1, Lemma 7.2]{Badescu}, $F$ itself is reduced and we also have $(K_X+F)|_{\tF}=K_{\tF}$. Since $\tF$ is smooth, $F$ is normal by \cite[Theorem A]{das}, thus $F\cong\tF\cong\bP^{n-1}$. Finally, since $f$ is a Mori fiber space, we get $\rho(X)=\rho(X/Y)+\rho(Y)=2$.
\end{proof}

\begin{lem} \label{lem:f-reg along F}
Let $f:X\rightarrow Y$ be a Mori fiber space with general fiber $F$. Assume that $X$ satisfies \ref{ass:seshadri_bound}, then there exists $D\sim_\bQ -\frac{1}{1-\epsilon}K_X$ such that $(X,D)$ is strongly $F$-regular along $F$.
\end{lem}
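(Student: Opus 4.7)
The plan is to construct $D$ as a small rational multiple of a single effective divisor $B\in|{-mK_X}|$ whose restriction to $F$ is smooth, for $m\gg 0$. By Lemma \ref{lem:F and Y}, $F\cong\bP^{n-1}$ with $-K_X|_F=\cO(n)$ very ample, and $\dim Y=1$; moreover, since $X$ is a $\bQ$-factorial terminal threefold, $X^{\mathrm{sing}}$ has codimension $\ge 3$ and is thus finite, so a general fiber $F$ lies entirely in $X^{\mathrm{sm}}$.

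Since $-K_X$ is $f$-ample and $Y$ is a projective curve, relative Serre vanishing yields $R^if_*\cO_X(-mK_X)=0$ for $i>0$ and $m\gg 0$, with $f_*\cO_X(-mK_X)$ locally free near $f(F)$ and globally generated on $Y$ for large $m$. This gives a surjective restriction map
\[
H^0(X,-mK_X)\twoheadrightarrow H^0(F,-mK_X|_F)=H^0(\bP^{n-1},\cO(mn)).
\]
I choose $m$ large enough that in addition $m(1-\epsilon)>1$, pick a smooth hypersurface $B_F\in|\cO_{\bP^{n-1}}(mn)|$ (such exists in any characteristic, the linear system being very ample on $\bP^{n-1}$), and lift $B_F$ to an effective divisor $B=(s)\in|{-mK_X}|$ via a section $s$ with $s|_F$ defining $B_F$. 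Setting
\[
D:=\tfrac{1}{m(1-\epsilon)}B\sim_{\bQ}-\tfrac{1}{1-\epsilon}K_X,
\]
I observe that for any $x\in B\cap F$ the smoothness of $B_F$ at $x$ inside $F$ implies $d(s|_F)(x)=ds(x)|_{T_xF}$ is non-zero, so $ds(x)\ne 0$ on $T_xX$ and $B$ itself is smooth at $x$. Hence $\mult_xB\le 1$ for every $x\in F$, giving $\mult_xD\le \frac{1}{m(1-\epsilon)}<1$. Applying Lemma \ref{lem:mult and test ideal}(1) at each such $x$ (valid since $F\subset X^{\mathrm{sm}}$) concludes that $(X,D)$ is strongly $F$-regular along $F$.

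The main technical inputs are the surjectivity of the restriction map and the existence of a smooth member in $|\cO_{\bP^{n-1}}(mn)|$; both are standard, the former from the Mori fiber space structure together with Serre vanishing on the projective curve $Y$, the latter elementary. Notably, the Seshadri hypothesis \ref{ass:seshadri_bound} enters only through Lemma \ref{lem:F and Y}, which identifies the fiber as $\bP^{n-1}$ and the base as a curve; once this structure is available, the rest is a Bertini plus Serre vanishing construction, and the precise coefficient $\frac{1}{1-\epsilon}$ plays no role in the argument beyond fixing the desired $\bQ$-linear equivalence class of $D$.
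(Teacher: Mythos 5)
The overall shape of your construction is the right one — produce $B\in|{-}mK_X|$ with $\mult_y B\le 1$ at every $y\in F$, rescale, and apply Lemma \ref{lem:mult and test ideal}(1) — but the step on which everything hinges is unjustified and, as stated, false. You claim that the Mori fiber space structure plus relative Serre vanishing yield that $f_*\cO_X(-mK_X)$ is globally generated, hence that $H^0(X,-mK_X)\to H^0(F,-mK_X|_F)$ is surjective. Relative Serre vanishing for the $f$-ample divisor $-K_X$ only kills $R^if_*$; it says nothing about the positivity of $f_*\cO_X(-mK_X)$ as a sheaf on the curve $Y$, and a vector bundle on a curve with many sections need not be globally generated. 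Concretely, $X=\bP^{n-1}\times C$ with $C$ a curve of genus $\ge 2$ satisfies every structural input you actually use ($F\cong\bP^{n-1}$, $\dim Y=1$, $-K_X$ $f$-ample, $\rho=2$), yet $f_*\cO_X(-mK_X)\cong H^0(\bP^{n-1},\cO(mn))\otimes\omega_C^{-m}$ has no global sections at all, so $|{-}mK_X|=\emptyset$ and no such $B$ (or $D$) exists. This shows that Assumption \ref{ass:seshadri_bound} must enter the argument beyond Lemma \ref{lem:F and Y}, exactly contrary to your closing remark; it is the quantitative jet-separation statement $s(-kK_X,x)\gtrsim(n-1+\epsilon)k$ at a very general point $x\in F$ that supplies sections of $-kK_X$ whose restrictions to $F$ have controlled multiplicity. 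Indeed the numerology $n-(n-1+\epsilon)=1-\epsilon$ is precisely where the coefficient $\frac{1}{1-\epsilon}$ comes from (restricting to a line in $F\cong\bP^{n-1}$ through $x$, a section separating $s$-jets at $x$ can be chosen to vanish to order at most $kn-s\approx k(1-\epsilon)$ at any other prescribed point), so the claim that this coefficient "plays no role" is a symptom of the gap rather than a feature.

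Two further points would still need attention even after the restriction map is dealt with. First, the image of $H^0(X,-mK_X)$ in $H^0(F,\cO(mn))$ is in general a proper linear subsystem, and in characteristic $p$ a general member of a (sub)linear system need not be smooth, nor need it compute the order of the base ideal at every point (Bertini fails, e.g.\ for systems composed with Frobenius); you need multiplicity $<m(1-\epsilon)$ at \emph{every} point of $F$ simultaneously, which is an intersection of infinitely many open conditions and does not follow from the pointwise statement. Second, your reduction to $F\subseteq X^{\mathrm{sm}}$ uses that terminal threefold singularities are isolated; the lemma is stated for arbitrary $n$ (though it is only applied with $n=3$), so this should be flagged. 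For comparison, the paper disposes of the lemma by citing the multiplicity criterion (Lemma \ref{lem:mult and test ideal}) together with the proof of \cite[Corollary 5.2]{LargeSeshadri}, which is where the Seshadri hypothesis is converted into the required multiplicity bound along $F$; that conversion is the missing content of your argument.
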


\begin{proof}
This follows from Lemma \ref{lem:mult and test ideal} and the same proof of \cite[Corollary 5.2]{LargeSeshadri}.
\end{proof}

\begin{lem} \label{lem:explicit_ample}
Let $f:X\rightarrow Y$ be a Mori fiber space with general fiber $F$. Assume that $X$ satisfies both \ref{ass:raylength} and \ref{ass:seshadri_bound}. Then $-K_X+AF$ is ample. 
\end{lem}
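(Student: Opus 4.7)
The plan is to prove ampleness via Kleiman's criterion, using the fact from Lemma \ref{lem:F and Y} that $\rho(X)=2$, so the Mori cone $\overline{NE}(X)$ has exactly two extremal rays. Writing $R_1$ for the ray contracted by $f$ and $R_2$ for the other one, it suffices to check that $(-K_X+AF)$ has strictly positive intersection with a generator of each $R_i$. Note first that since $Y$ is a smooth curve (Lemma \ref{lem:F and Y}), any two fibers of $f$ are numerically equivalent, so $F$ is a well-defined numerical class and $\bQ$-Cartier on $X$.

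Handling $R_1$ is immediate: since $f$-contracted curves have zero intersection with a fiber, $F\cdot R_1=0$, while $-K_X$ is $f$-ample so $(-K_X)\cdot R_1>0$. Thus $(-K_X+AF)\cdot R_1>0$. For $R_2$, the key geometric observation is that if $C$ is a curve generating $R_2$, then $C$ cannot be contracted by $f$ (otherwise $R_2\subseteq R_1$, contradicting $\rho(X/Y)=1$ together with $R_2\ne R_1$). Hence the restriction $f|_C\colon C\to Y$ is surjective of some positive degree $d$, and $F\cdot C = d\geq 1$.

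Now I split by the sign of $K_X\cdot R_2$. If $K_X\cdot R_2\le 0$, then $(-K_X+AF)\cdot C\ge 0+A(F\cdot C)>0$, and we are done. If instead $K_X\cdot R_2>0$, Assumption \ref{ass:raylength} provides a generator $C$ with $K_X\cdot C<A$, and combining this with $F\cdot C\ge 1$ gives
\[
(-K_X+AF)\cdot C \;=\; -(K_X\cdot C)+A(F\cdot C) \;>\; -A + A \;=\; 0.
\]
In all cases $(-K_X+AF)$ is strictly positive on both extremal rays, hence ample by Kleiman's criterion.

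I don't anticipate a serious obstacle here. The only points that require a bit of care are (i) the numerical well-definedness of the class of $F$, which follows from $Y$ being a smooth curve, and (ii) the dichotomy on $R_2$: without Assumption \ref{ass:raylength} one could not control $K_X\cdot C$ in the $K_X$-positive case, which is precisely why \ref{ass:raylength} was introduced. Everything else is formal once $\rho(X)=2$ is in hand.
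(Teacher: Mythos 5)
Your proof is correct and follows essentially the same route as the paper's: with $\rho(X)=2$ from Lemma \ref{lem:F and Y}, check positivity of $-K_X+AF$ on the $f$-contracted ray via $f$-ampleness of $-K_X$, and on the other ray via $F\cdot C\ge 1$ (as $F$ is Cartier) together with the bound $K_X\cdot C<A$ from Assumption \ref{ass:raylength}. The only cosmetic difference is that the paper disposes of your case $K_X\cdot R_2\le 0$ up front by assuming $-K_X$ is not nef, which also guarantees (via \ref{ass:raylength}) that the second ray is generated by a curve.
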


\begin{proof}
We may assume that $-K_X$ is not nef. By Lemma \ref{lem:F and Y}, $F$ is smooth divisor in $X$, $Y$ is a curve and $\rho(X)=2$. It follows that the Mori cone $\overline{NE}(X)$ is 2-dimensional, hence by \ref{ass:raylength}, it is generated by a curve $C$ that dominates $Y$ with $0<(K_X\cdot C)<A$ and a line $l$ in $F$. Let $L=-K_X+AF$, then since $F$ is Cartier we have $(L\cdot C)\ge (-K_X\cdot C)+A>0$ and it is clear that $(L\cdot l)>0$. Hence $L$ lies in the interior of the nef cone and is therefore ample.
\end{proof}

We will also need the following result:

\begin{lem} \label{lem:Qeff}
Let $\phi:X\rightarrow Y$ be a projective morphism onto a curve $Y$ with general fiber $F$. Let $L$ be a $\bQ$-Cartier big divisor on $X$ and let $\lambda<\frac{\vol_X(L)}{n\cdot \vol_F(L|_F)}$ be a rational number where $n=\dim X$, then $L-\lambda F$ is $\mathbb{Q}$-effective.
\end{lem}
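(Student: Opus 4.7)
The plan is to argue by contradiction via a vanishing-order filtration of $H^0(X,mL)$ along a smooth general fiber. Suppose $L-\lambda F$ is not $\bQ$-effective. Fix a smooth general fiber $F$; since $Y$ is a smooth curve, $F$ is a Cartier divisor on $X$ whose conormal bundle is trivial, so $\cO_X(F)|_F \cong \cO_F$.

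For $m$ sufficiently large and divisible so that $mL$ is Cartier and $m\lambda \in \bZ$, consider the decreasing filtration
\[
V_k := H^0(X,\cO_X(mL-kF)), \qquad V_0 = H^0(X,mL).
\]
The short exact sequence
\[
0 \to \cO_X(mL-(k+1)F) \to \cO_X(mL-kF) \to \cO_F(mL|_F) \to 0
\]
induces an injection $V_k/V_{k+1} \hookrightarrow H^0(F,mL|_F)$ for every $k$. Since $L-\lambda F$ is not $\bQ$-effective, $V_{m\lambda}=0$, hence
\[
h^0(X,mL) \;=\; \sum_{k=0}^{m\lambda-1} \dim(V_k/V_{k+1}) \;\le\; m\lambda\cdot h^0(F,mL|_F).
\]
Comparing leading terms via $h^0(X,mL) = \frac{\vol_X(L)}{n!}m^n + o(m^n)$ (using that $L$ is big) and $h^0(F,mL|_F) \le \frac{\vol_F(L|_F)}{(n-1)!}m^{n-1} + o(m^{n-1})$, and letting $m\to\infty$, I obtain $\vol_X(L) \le n\lambda\cdot\vol_F(L|_F)$, contradicting the hypothesis $\lambda < \vol_X(L)/(n\cdot\vol_F(L|_F))$.

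This is essentially a routine argument and I do not expect a serious obstacle. The two ingredients that need checking are (i) the trivialization $\cO_X(F)|_F \cong \cO_F$, which is immediate since $F$ is a fiber over a point of the smooth curve $Y$, and (ii) the asymptotic upper bound on $h^0(F,mL|_F)$, which holds automatically when $L|_F$ is big and is strictly stronger (with $\vol_F(L|_F)=0$ and $h^0(F,mL|_F)=o(m^{n-1})$) when $L|_F$ is not big, in which case the filtration argument yields an even sharper contradiction. A minor subtlety is that "$L-\lambda F$ is $\bQ$-effective" must be interpreted with respect to a specific (general) fiber $F$; since all fibers are numerically equivalent and the assertion is about a numerical quantity on the left-hand side, this ambiguity is harmless.
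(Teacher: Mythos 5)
Your argument is correct and is essentially the paper's own proof run in the contrapositive: the filtration $V_k=H^0(X,mL-kF)$ with $V_k/V_{k+1}\hookrightarrow H^0(F,mL|_F)$ is exactly the telescoped form of the short exact sequences the paper uses, and the asymptotic comparison of $h^0(X,mL)$ with $m\lambda\cdot h^0(F,mL|_F)$ is the same volume count. No substantive difference.
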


\begin{proof}
Let $m$ be a sufficiently divisible positive integer and let $k=m\lambda$.
We may assume $\lambda>0$ and $k\in\mathbb{Z}$. By the exact sequence
\[
0\rightarrow\mathcal{O}_{X}(mL-lF)\rightarrow\mathcal{O}_{X}(mL-(l-1)F)\rightarrow\mathcal{O}_{F}(mL)\rightarrow0
\]
for $l=1,\cdots,k$ we get 
\begin{eqnarray*}
h^{0}(X,\mathcal{O}_{X}(mL-kF)) & \ge & h^{0}(X,\mathcal{O}_{X}(mL))-k\cdot h^{0}(F,\mathcal{O}_{F}(mL)) \\
                                & =   & [\frac{\vol_X(L)}{n!}-\lambda\frac{\vol_F(L|_F)}{(n-1)!}]m^{n}+O(m^{n-1})>0, 
\end{eqnarray*}
where the middle equality is the asymptotic Riemann-Roch formula and the last inequality follows from the assumption on $\lambda$. Since $mL-kF=m(L-\lambda F)$, the lemma follows.
\end{proof}

\begin{rem}
When $F$ is normal, the above lemma also holds even if $L$ is not $\bQ$-Cartier since in this case the restriction $L|_F$ is well defined (it is determined by the restriction of $L$ to the smooth locus of $F$). Also note that if in addition $L$ has integral coefficients, then in the above argument $m$ can be taken as any sufficiently large integer such that $m\lambda\in\bZ$. Therefore, if the denominator of $\lambda$ is not divisible by $p$, then we can choose $\Delta\sim_\bQ L-\lambda F$ such that $m\Delta$ has integral coefficient for some integer $m$ that is not divisible by $p$.
\end{rem}

Recall that in characteristic zero, the weak boundedness of varieties with large moving Seshadri constants \cite{LargeSeshadri} is proved using the connectedness lemma of Koll\'ar-Shokurov, which is not yet available in positive characteristic. The following weaker version, however, suffices for the purpose of this section.

\begin{lem} \label{lem:weak connectedness}
Let $(X,D)$ be a pair such that $X$ is projective and $-(K_X+D)$ is ample, then $(X,D)$ has at most one {\it good} $F$-pure center.
\end{lem}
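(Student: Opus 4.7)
The plan is to assume for contradiction that $(X,D)$ admits two distinct good $F$-pure centers $W_{1}$ and $W_{2}$, and then use the ampleness of $-(K_X+D)$ together with the trace map to derive a contradiction with the one-dimensionality of $H^{0}(X,\cO_X)$.

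First I would expect the ``good'' condition to entail both minimality of $W_{i}$ (as an $F$-pure center) and surjectivity of the induced quotient trace $F_{*}^{e}(\cL|_{W_{i}})\twoheadrightarrow\cO_{W_{i}}$ for some $e$, where $\cL=\cO_X(\lfloor(1-p^{e})(K_X+D)\rfloor)$. Standard facts from Schwede's theory of uniformly $F$-compatible ideals then imply that $W_{1}\cap W_{2}$, if nonempty, supports another $F$-pure subcenter, which by minimality would force $W_{1}=W_{2}$. So I may assume that $W_{1}$ and $W_{2}$ are disjoint.

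Second, since $-(K_X+D)$ is ample, for $e$ large and divisible enough the line bundle $\cL$ is ample, so Serre vanishing gives $H^{1}(X,\cL\otimes\cI_{W_{1}\sqcup W_{2}})=0$ and the restriction map
\[
H^{0}(X,\cL)\twoheadrightarrow H^{0}(W_{1},\cL|_{W_{1}})\oplus H^{0}(W_{2},\cL|_{W_{2}})
\]
is surjective. Now consider the commutative square
\[
\begin{array}{ccc}
F_{*}^{e}\cL & \xrightarrow{\ \mathrm{Tr}^{e}\ } & \cO_X\\
\downarrow & & \downarrow\\
F_{*}^{e}(\cL|_{W_{1}\sqcup W_{2}}) & \longrightarrow & \cO_{W_{1}\sqcup W_{2}},
\end{array}
\]
whose bottom row is surjective by the goodness of each $W_{i}$. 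Taking $H^{0}$ and combining with the previous display produces a surjection $H^{0}(X,F_{*}^{e}\cL)\twoheadrightarrow k\oplus k$; but commutativity forces this composite to factor through $H^{0}(X,\cO_X)=k$ followed by the diagonal $k\hookrightarrow k\oplus k$, whose image has dimension $1$---a contradiction.

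The main obstacle will be the first step: making precise what ``good'' means and proving that two distinct good centers must be disjoint. This relies on the delicate fact that sums of uniformly $F$-compatible ideals remain uniformly $F$-compatible, so that $\cI_{W_{1}}+\cI_{W_{2}}$ cuts out a subscheme still supporting an $F$-pure center. Once disjointness is in hand, the cohomological portion of the argument is a direct positive-characteristic surrogate for Koll\'ar--Shokurov connectedness, with Serre vanishing playing the role of Kawamata--Viehweg.
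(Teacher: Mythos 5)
Your proposal is correct and follows essentially the same route as the paper: disjointness of the two centers via minimality (which the paper gets from $F$-adjunction and \cite[Lemma 3.5]{schwede-F-center}, i.e. exactly the compatibility of $\cI_{W_1}+\cI_{W_2}$ you invoke), then the commutative trace-map square, Serre/Fujita vanishing of $H^1(X,\cI_W\otimes\cL)$ for $e\gg 0$, and the resulting impossible surjection $H^0(X,\cO_X)\twoheadrightarrow k\oplus k$. Your reading of "good" (globally $F$-regular pair on the center after $F$-adjunction, hence minimal with surjective trace) matches the paper's definition, so no gap remains.
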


Let us elaborate the meaning of {\it good $F$-pure center} here. Let $(X,D)$ be a pair and $Y\subseteq X$ a normal subvariety such that the following conditions hold in a neighbourhood of $Y$ (we refer to \cite{schwede-adjunction} for the definition of center of F-purity):
\begin{enumerate}
    \item The Cartier index of $K_X+D$ is not divisible by $p$;
    \item $(X,D)$ is sharply $F$-pure along $Y$ and $Y$ is a center of sharp $F$-purity for $(X,D)$.
\end{enumerate}
Then by the main theorem of \cite{schwede-adjunction}, there exists a canonically determined effective divisor $D_Y$ such that $(K_X+D)|_Y\sim_\bQ K_Y+D_Y$. We say that $Y$ is a good $F$-pure center of $(X,D)$ if $(Y,D_Y)$ is globally $F$-regular.

\begin{proof}
Suppose there are two distinct good $F$-pure centers $W_1$, $W_2$ of the pair $(X,D)$ and we will derive a contradiction. By the main theorem of \cite{schwede-adjunction}, both $W_i$ are minimal among centers of sharp $F$-purity for $(X,D)$, hence by \cite[Lemma 3.5]{schwede-F-center}, $W_1$ is disjoint from $W_2$. Let $W=W_1\cup W_2$ and let $e>0$ be a sufficiently divisible integer. We have the following commutative diagram
\[
\xymatrix{F_{*}^{e}\mathcal{O}_{X}((1-p^{e})(K_{X}+D))\ar[r]^{\psi}\ar[d]^{\mathrm{Tr}_{X}^{e}} & F_{*}^{e}\mathcal{O}_{W}((1-p^{e})(K_{W}+D_{W}))\ar[d]^{\mathrm{Tr}_{W}^{e}}\\
\mathcal{O}_{X}\ar[r]^{\phi} & \mathcal{O}_{W}.
}
\]
Since the $W_i$'s are globally $F$-regular, $H^0(\mathrm{Tr}_{W}^{e})$ is surjective. On the other hand, the cokernel of $H^0(\psi)$ lies in $H^1(X,\mathcal{I}_{W}((1-p^{e})(K_{X}+D))$, which vanishes for $e\gg 0$ since $-(K_X+D)$ is ample. Hence $\phi\circ\mathrm{Tr}_{X}^{e}=\mathrm{Tr}_{W}^{e}\circ\psi$ induces a surjection on $H^0$. In particular, the natural restriction $H^0(X,\mathcal{O}_{X})\rightarrow H^0(W,\mathcal{O}_{W})$ is surjective. But as $W$ contains two connected components, this is a contradiction.
\end{proof}

We now prove that the assumptions \ref{ass:raylength} and \ref{ass:seshadri_bound} together imply weak boundedness for Mori fiber spaces.

\begin{thm} \label{thm:mfs}
Given $v,\alpha>0$, there exists a constant $M=M(n,A,v,\alpha)$ depending only on $n$, $A$, $v$ and $\alpha$ such that if $f:X\rightarrow Y$ is a Mori fiber space such that $X$ satisfies \ref{ass:raylength}, $Y$ is a curve and the general fiber $F$ is globally $F$-regular with $\vol(-K_F)<v$ and  $\mathrm{fpt}(F)>\alpha$, then $\vol(-K_X)<M$.
\end{thm}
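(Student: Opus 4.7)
The plan is to argue by contradiction: I assume $V:=\vol(-K_X)$ is very large compared to $n,A,v,\alpha$, and construct a pair $(X,D)$ with $-(K_X+D)$ ample that admits two disjoint good $F$-pure centers, contradicting Lemma \ref{lem:weak connectedness}. To set up the geometry, first observe that since $Y$ is a curve and $\rho(X/Y)=1$ from the Mori fiber space structure, one has $\rho(X)=2$, so $\overline{NE}(X)$ has exactly two extremal rays: the MFS ray generated by a fiber curve $l$, and (when $-K_X$ is not already nef) a $K_X$-positive ray whose generator $C$ satisfies $(K_X\cdot C)<A$ by Assumption \ref{ass:raylength}. Since $F=f^{*}(y)$ is a Cartier divisor pulled back from the smooth curve $Y$, $F$ is nef, and a direct check on the two extremal rays shows that $L:=-K_X+AF$ is ample (mimicking the proof of Lemma \ref{lem:explicit_ample}, which does not really need Assumption \ref{ass:seshadri_bound} beyond the structural information we already have). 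Using $F^{2}\equiv 0$, $L|_{F}=-K_{F}$, and the projection formula I compute $\vol(L)=L^{n}=(-K_X)^{n}+nA(-K_X)^{n-1}\!\cdot F=V+nA\vol(-K_F)\ge V$.

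Next I apply Lemma \ref{lem:Qeff} to $L$: for any rational $\lambda<\vol(L)/(n\vol(-K_F))$ there exists an effective $\bQ$-divisor $D_{0}\sim_{\bQ}L-\lambda F\sim_{\bQ}-K_X-\mu F$ with $\mu:=\lambda-A$. Since $\vol(L)\ge V$ and $\vol(-K_F)<v$, I can make $\mu$ arbitrarily close to $V/(nv)-A$, which is arbitrarily large when $V$ is; the remark following Lemma \ref{lem:Qeff} also allows me to keep denominators of $D_{0}$ (and of all subsequent rational coefficients) coprime to $p$.

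Now pick two distinct very general points $y_{1},y_{2}\in Y$ with fibers $F_{1},F_{2}$, both globally $F$-regular, mutually disjoint, and disjoint from every vertical component of $D_{0}$; also choose a rational $c\in\bigl(\tfrac{2+A}{\mu+A},\alpha\bigr)$ with denominator coprime to $p$. This interval is nonempty precisely when $V>nv(2+A)/\alpha$. Setting $D:=F_{1}+F_{2}+cD_{0}$, a numerical computation gives
\[
-(K_X+D)\equiv (1-c)(-K_X+AF)+\bigl[c(\mu+A)-(2+A)\bigr]F,
\]
a strictly positive combination of the ample class $-K_X+AF$ and the nef class $F$, so $-(K_X+D)$ is ample. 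Each $F_{i}$ has coefficient $1$ in $D$; by Schwede's $F$-adjunction the different on $F_{i}$ is $cD_{0}|_{F_{i}}\sim_{\bQ}-cK_{F_{i}}$ (using $F|_{F_{i}}=0$ and $F_{3-i}\cap F_{i}=\emptyset$), and since $F_{i}$ is globally $F$-regular and $c<\alpha<\mathrm{fpt}(F_{i})$, the pair $(F_{i},cD_{0}|_{F_{i}})$ is globally $F$-regular. Thus $F_{1}$ and $F_{2}$ are two disjoint good $F$-pure centers of $(X,D)$, contradicting Lemma \ref{lem:weak connectedness}. Hence $V\le M:=nv(2+A)/\alpha$, a constant depending only on $n,A,v,\alpha$.

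The main obstacle, as I see it, lies in the verification that each $F_{i}$ is genuinely a \emph{good} $F$-pure center. Schwede's adjunction requires the Cartier index of $K_X+D$ to be coprime to $p$, which I arrange by controlling the denominators of $c$ and $D_{0}$ (and by using that $K_X$ has Cartier index coprime to $p$ on a terminal threefold in characteristic $p>5$); and passing from the strong $F$-regularity threshold $\mathrm{fpt}(F)>\alpha$ to the \emph{global} $F$-regularity of $(F_{i},cD_{0}|_{F_{i}})$ uses global $F$-regularity of $F_{i}$ together with the ampleness of $-K_{F_{i}}$, via a standard test-ideal argument for globally $F$-regular $\bQ$-Fano pairs.
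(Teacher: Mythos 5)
Your proposal is correct and follows essentially the same route as the paper: take $D=F_1+F_2+c D_0$ with $D_0\sim_{\bQ}-K_X-\mu F$ produced by Lemma \ref{lem:Qeff} and $c<\mathrm{fpt}(F)$, verify ampleness of $-(K_X+D)$ via the two-ray cone and Assumption \ref{ass:raylength}, and use $F$-adjunction to exhibit two good $F$-pure centers contradicting Lemma \ref{lem:weak connectedness}. The only cosmetic deviations are that you feed $-K_X+AF$ rather than $-K_X$ into Lemma \ref{lem:Qeff}, and that for the coprime-to-$p$ Cartier index of $K_X+D$ the paper perturbs the coefficient $r$ instead of invoking a fact about indices of terminal threefolds in characteristic $p>5$ --- the paper's perturbation is the safer move (and works for all $n$, as the theorem is stated in arbitrary dimension), but your observation that Lemma \ref{lem:explicit_ample} needs only $\rho(X)=2$ and Assumption \ref{ass:raylength} here is a point the paper glosses over.
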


\begin{proof}
We may assume $\alpha<1$. Let $0<\lambda<(nv)^{-1}\vol(-K_X)$, $0<r<\alpha$ be rational numbers whose denominators are not divisible by $p$. Apply Lemma \ref{lem:Qeff} and its subsequent remark to $L=-K_{X}$ we see that there exists an effective divisor $\Delta\sim_{\mathbb{Q}}-K_{X}-\lambda F$ such that $m\Delta$ has integral coefficients for some $p\nmid m$. Let $D=F_1+F_2+r\Delta$ where $F_1$ and $F_2$ are two distinct general fibers of $f$ . We have $-(K_{X}+D)\sim_{\mathbb{Q}}-(1-r)K_{X}+(r\lambda-2)F$. Suppose that $r\lambda-2\ge A(1-r)$ where $A$ is the constant in \ref{ass:raylength}, then $-(K_{X}+D)$ is ample by Lemma \ref{lem:explicit_ample}. Perturbing $r$, we may assume that $(1-r)lK_X$ is Cartier for some $p\nmid l$. It follows that the Cartier index of $K_X+D$ is not divisible by $p$. On the other hand, as $(K_X+D)|_{F_i}\sim K_{F_i}+D_{F_i}$ where $D_{F_i}\sim_{\bQ}-rK_{F_i}$ and $r<\mathrm{fpt}(F)$, we see that $(F_i,D_{F_i})$ is globally $F$-regular. By \cite[Theorem A]{das}, $(X,D)$ is purely $F$-regular along $F_i$ and it follows that both $F_i$ are good $F$-pure centers for $(X,D)$, which contradicts Lemma \ref{lem:weak connectedness}. Hence we always have $r\lambda-2<A(1-r)$ and since $\lambda$ (resp. $r$) can be arbitrarily close to $(nv)^{-1}\vol(-K_X)$ (resp. $\alpha$), it follows immediately that $\vol(-K_{X})$ is bounded from above by a constant $M(n,A,v,\alpha)$ depending only on $n$, $A$, $v$ and $\alpha$.
\end{proof}

\begin{cor} \label{cor:mfs}
There exists a constant $M=M(n,\epsilon)$ depending only on $n$ and $\epsilon$ such that if $f:X\rightarrow Y$ is a Mori fiber space such that $X$ satisfies \ref{ass:raylength} and \ref{ass:seshadri_bound}, then $\vol(-K_X)<M$.
\end{cor}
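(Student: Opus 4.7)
The plan is to package Corollary \ref{cor:mfs} as an immediate application of Theorem \ref{thm:mfs}, once the ``fiber data'' $v$ and $\alpha$ are computed from $n$ alone. The engine is the identification of the general fiber provided by Lemma \ref{lem:F and Y}; after that, nothing genuinely new happens.

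\smallskip

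First I would invoke Lemma \ref{lem:F and Y}: since $X$ satisfies \ref{ass:seshadri_bound}, the general fiber $F$ of $f$ is isomorphic to $\bP^{n-1}$ and $Y$ is a curve. This is exactly the geometric setup in which Theorem \ref{thm:mfs} applies. It also gives complete control over $F$: we have $\vol(-K_F) = \vol(\cO_{\bP^{n-1}}(n)) = n^{n-1}$, so we may set $v := n^{n-1}+1$. Moreover $F \cong \bP^{n-1}$ is globally $F$-regular (take $D=0$ in Corollary \ref{cor:fpt-Pn}), as required.

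\smallskip

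Next I would produce a uniform lower bound on $\mathrm{fpt}(F)$ that depends only on $n$. For this, note that any effective $\bQ$-divisor $D \sim_\bQ -K_F$ on $F = \bP^{n-1}$ has degree $n$, so for rational $t < 1/n$ the divisor $tD$ has degree strictly less than $1$, and Corollary \ref{cor:fpt-Pn} then tells us that $(F,tD)$ is globally $F$-regular, in particular strongly $F$-regular everywhere on $F$. Hence $\mathrm{fpt}(F) \ge 1/n$, and we may take $\alpha := 1/(n+1)$, again a quantity depending only on $n$.

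\smallskip

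With these choices, Theorem \ref{thm:mfs} immediately yields
\[
\vol(-K_X) \;<\; M\bigl(n,\, A,\, n^{n-1}+1,\, 1/(n+1)\bigr).
\]
Since $A = A(n,\epsilon)$ by \ref{ass:raylength}, the right-hand side depends only on $n$ and $\epsilon$, giving the desired $M(n,\epsilon)$. There is no real obstacle here; the only thing worth flagging is the subordinate point that $\mathrm{fpt}(\bP^{n-1}) \ge 1/n$ is a dimension-only bound (as opposed to, say, a characteristic-dependent one), which is what makes Corollary \ref{cor:fpt-Pn} indispensable for the uniformity in $p$.
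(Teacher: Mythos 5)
Your proof is correct and follows essentially the same route as the paper: identify $F\cong\bP^{n-1}$ and $Y$ a curve via Lemma \ref{lem:F and Y}, bound $v$ and $\alpha$ in terms of $n$ using Corollary \ref{cor:fpt-Pn}, and feed these into Theorem \ref{thm:mfs}. Your explicit choice $\alpha=1/(n+1)$, justified by $\mathrm{fpt}(\bP^{n-1})\ge 1/n$, is actually more careful than the paper's stated ``any $\alpha<1$'' (which, under the paper's own definition of $\mathrm{fpt}$ via divisors $D\sim_\bQ -K_F$ of degree $n$, would not satisfy the hypothesis $\mathrm{fpt}(F)>\alpha$ of Theorem \ref{thm:mfs}), so your version is the safe one.
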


\begin{proof}
Let $F$ be the general fiber of $f$. By Lemma \ref{lem:F and Y}, $F\cong\mathbb{P}^{n-1}$ and $Y$ is a curve. By Corollary \ref{cor:fpt-Pn}, the existence of $M$ follows from Theorem \ref{thm:mfs} by taking any $v>n^{n-1}$ and $\alpha<1$. 
\end{proof}

In the remaining part of the section, we assume that $p>5$. We proceed to show that \ref{ass:seshadri_bound} implies \ref{ass:raylength} for Mori fiber spaces in dimension at most 3.

\begin{lem} \label{lem:raylength}
Let $(X,D)$ be a pair with $\dim X\le 3$ and $R$ a $(K_{X}+D)$-negative extremal ray. Assume that 
\begin{enumerate}
\item $R$ is generated by a curve; 
\item Every curve generating $R$ is not contained in the non-klt locus
of $(X,D)$.
\end{enumerate}
Then $R$ is generated by a rational curve $C$ such that $0<-(K_{X}+D\cdot C)\le2\dim X$.
\end{lem}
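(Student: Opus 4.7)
The plan is to reduce this to the cone theorem for klt threefold pairs in characteristic $p>5$ from \cite{mmp-hx,mmp-birkar,mmp-bw}, which produces a rational curve in each $(K+\Delta)$-negative extremal ray with the classical Kawamata length bound $2\dim$. The cases $\dim X\le 2$ are classical in any characteristic (the surface cone theorem already gives a sharper bound), so I focus on $\dim X=3$.

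The core technical step is an lc-to-klt reduction exploiting hypothesis (2). Pick a curve $C_0$ generating $R$; by (2), $C_0$ is not contained in $\mathrm{Nklt}(X,D)$. Take a $\bQ$-factorial dlt modification $\pi:Y\rightarrow X$ of $(X,D)$, which is available for threefolds in characteristic $p>5$ by the cited references, so that $K_Y+D_Y=\pi^*(K_X+D)$ with $(Y,D_Y)$ dlt. Its non-klt locus $\mathrm{Nklt}(Y,D_Y)$ is exactly $\lfloor D_Y\rfloor$, and the strict transform $C_Y\subset Y$ of $C_0$ is contained neither in $\lfloor D_Y\rfloor$ nor in the $\pi$-exceptional locus. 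The class $[C_Y]$ is $(K_Y+D_Y)$-negative, and by construction $\pi_*[C_Y]\in R$.

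Next, perturb the coefficients of $\lfloor D_Y\rfloor$ downward slightly to obtain a klt boundary $D_Y'\le D_Y$ on $Y$. Since $C_Y\not\subseteq\lfloor D_Y\rfloor$, we still have $(K_Y+D_Y')\cdot C_Y<0$. By the cone theorem for klt threefolds in characteristic $p>5$ applied to $(Y,D_Y')$, the face $\pi_*^{-1}(R)\subseteq\overline{NE}(Y)$ contains a $(K_Y+D_Y')$-negative extremal ray $R_Y$ generated by a rational curve $\widetilde{C}$ with $0<-(K_Y+D_Y')\cdot\widetilde{C}\le 2\dim Y=6$. The pushforward $C:=\pi_*\widetilde{C}$ is then a nonzero rational curve generating $R$, and by the projection formula together with $D_Y'\le D_Y$,
\[
-(K_X+D)\cdot C \;=\; -(K_Y+D_Y)\cdot\widetilde{C} \;\le\; -(K_Y+D_Y')\cdot\widetilde{C} \;\le\; 2\dim X,
\]
while the positivity $-(K_X+D)\cdot C>0$ is automatic from $R$ being $(K_X+D)$-negative.

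The main obstacle is the lc-to-klt reduction: one must verify that an extremal ray of $\overline{NE}(Y)$ can be selected inside the face $\pi_*^{-1}(R)$, and that its generator remains in the $(K_Y+D_Y')$-negative half-space after the coefficient perturbation. Hypothesis (2) is essential, because without it the lifted class could be supported on $\lfloor D_Y\rfloor$ or on the $\pi$-exceptional locus, and in either situation decreasing the boundary coefficients would flip the sign of the intersection with $K_Y+D_Y'$, breaking the application of the klt cone theorem.
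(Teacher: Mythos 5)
Your overall strategy --- pass to a klt model, invoke the klt cone theorem, and push the resulting rational curve back down to $X$ --- is in the right spirit; the paper itself simply runs the proof of \cite[Theorem 2.13]{jiang} with the characteristic-$p$ inputs \cite[Theorem 1.2]{mmp-birkar} (log minimal models) and \cite[Theorem 1.1]{mmp-bw} (klt cone theorem). However, your write-up has two genuine gaps. First, a crepant dlt modification with $K_Y+D_Y=\pi^*(K_X+D)$ and $(Y,D_Y)$ dlt exists only when $(X,D)$ is log canonical, and the lemma makes no such assumption: a pair here is just a normal variety with an effective $\bQ$-divisor, and in the application (Lemma \ref{lem:3-fold-mfs}) the pair $(X,D)$ is only klt along the general fiber and may be non-lc elsewhere. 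For a non-lc pair one can at best arrange $K_Y+D_Y=\pi^*(K_X+D)-F$ with $F\ge 0$ supported over the non-lc locus, so both the construction and the final projection-formula computation need adjusting (the inequality still goes the right way provided $\widetilde{C}\not\subseteq\Supp F$, but this must be checked, and the existence of such modifications in characteristic $p$ needs justification).

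Second, and more seriously, the step you yourself label ``the main obstacle'' is precisely the point left unproved: the $(K_Y+D_Y')$-negative extremal ray produced by the cone theorem inside the extremal face $\pi_*^{-1}(R)\cap\overline{NE}(Y)$ may be $\pi$-exceptional, in which case $\pi_*\widetilde{C}=0$ and you obtain no curve generating $R$. That face contains all of $\overline{NE}(Y/X)$, and decomposing $[C_Y]=z+\sum_j\lambda_j r_j$ with $z$ in the $(K_Y+D_Y')$-nonnegative part yields no contradiction if every $r_j$ is contracted by $\pi$, since $z$ is only $(K_Y+D_Y')$-nonnegative, not $\pi^*(K_X+D)$-nonnegative. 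Hypothesis (2) does not rescue this: it constrains curves on $X$ generating $R$, not the exceptional rays upstairs. The standard remedy --- and the reason the proof needs the existence of (relative) log minimal models rather than a single application of the cone theorem --- is to first run a $(K_Y+D_Y')$-MMP over $X$ so that the log canonical divisor becomes nef over $X$; on that model every $(K+\Delta)$-negative extremal ray in the face is automatically non-exceptional, and the pushforward argument (together with your correct observation that hypothesis (2) forces $\widetilde{C}\not\subseteq\lfloor D_Y\rfloor$ once $\pi_*\widetilde{C}\neq 0$) then closes. As written, your argument identifies the difficulty but does not overcome it.
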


\begin{proof}
The proof is the same as that of \cite[Theorem 2.13]{jiang}, except that we use \cite[Theorem 1.2]{mmp-birkar} for the existence of log minimal model and \cite[Theorem 1.1]{mmp-bw} for cone theorem for klt pairs in dimension 3.
\end{proof}

\begin{lem} \label{lem:3-fold-mfs}
Let $f:X\rightarrow Y$ be a Mori fiber space such that $\dim X\le 3$. Assume that $X$ satisfies \ref{ass:seshadri_bound}. Then $X$ also satisfies \ref{ass:raylength}.
\end{lem}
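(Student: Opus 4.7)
The plan is to apply Lemma \ref{lem:raylength} to the pair $(X,D)$ produced by Lemma \ref{lem:f-reg along F}, in order to bound the $K_X$-degree of any generator of a $K_X$-positive extremal ray.

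By Lemma \ref{lem:F and Y} one has $\dim Y=1$, $F\cong\bP^{n-1}$, and $\rho(X)=2$; so $\overline{NE}(X)$ has exactly two extremal rays, the $K_X$-negative Mori ray $R_1$ (a line in $F$, contracted by $f$) and a second ray $R_2$, which is the only candidate for a $K_X$-positive extremal ray. Taking $D\sim_\bQ-\tfrac{1}{1-\epsilon}K_X$ as in Lemma \ref{lem:f-reg along F}, so that $(X,D)$ is strongly $F$-regular along $F$, I compute $K_X+D\sim_\bQ-\tfrac{\epsilon}{1-\epsilon}K_X$. Hence for any curve $C$ with $[C]\in R_2$ and $K_X\cdot C>0$, the intersection $(K_X+D)\cdot C$ is negative, so $R_2$ is $(K_X+D)$-negative.

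To invoke Lemma \ref{lem:raylength}, I verify its two hypotheses. Hypothesis (2) is immediate: the non-klt locus of $(X,D)$ is disjoint from $F$ by strong $F$-regularity along $F$, whereas any generator of $R_2$ dominates $Y$ (because $\rho(X/Y)=1$) and hence meets $F$. For hypothesis (1), that $R_2$ is generated by an actual curve, I would analyze $\mathrm{Nef}(X)$: one boundary ray is spanned by $[F]$ (using $F\cdot R_1=0$ and $F^2=0$ since general fibers are linearly equivalent and disjoint), and the other by a class of the form $L\sim_\bQ-K_X+a_*F$, with $a_*=\tfrac{K_X\cdot v}{F\cdot v}$ for any $0\neq v\in R_2$. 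An expansion using $F^2=0$ gives $L^n=(-K_X)^n+na_*(-K_X)^{n-1}\cdot F>0$, since $-K_X$ is big (as $\epsilon_m(-K_X)>0$) and $-K_X|_F$ is ample on $F\cong\bP^{n-1}$. So $L$ is nef and big, and by the base-point-free theorem for $\bQ$-factorial terminal threefolds in characteristic $p>5$, available from the threefold MMP of \cite{mmp-hx,mmp-birkar,mmp-bw}, $L$ is semiample. The induced morphism $g\colon X\to Z$ then has positive-dimensional fibers whose classes lie in $R_2$, supplying the desired generating curve.

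With both hypotheses of Lemma \ref{lem:raylength} established for the pair $(X,D)$ and the ray $R=R_2$, I obtain a rational curve $C$ generating $R_2$ with $-(K_X+D)\cdot C\le 2\dim X=2n$. Substituting the formula for $K_X+D$ yields $K_X\cdot C\le\tfrac{2n(1-\epsilon)}{\epsilon}$, which verifies Assumption \ref{ass:raylength} with $A(n,\epsilon):=\tfrac{2n(1-\epsilon)}{\epsilon}$. The main technical difficulty I anticipate is the justification of hypothesis (1) above: one must select a version of the base-point-free theorem in characteristic $p>5$ that applies directly to the nef big divisor $L$, without needing $(X,D)$ to be globally klt, a property that Lemma \ref{lem:f-reg along F} only guarantees in a neighborhood of $F$.
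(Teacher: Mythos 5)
Your overall skeleton matches the paper's proof: use Lemma \ref{lem:F and Y} to get $\rho(X)=2$, take the pair $(X,D)$ from Lemma \ref{lem:f-reg along F}, note that the non-fiber extremal ray $R$ is $(K_X+D)$-negative with $K_X+D\sim_\bQ-\frac{\epsilon}{1-\epsilon}K_X$, check hypothesis (2) of Lemma \ref{lem:raylength} exactly as you do, and conclude with $A=\frac{2n(1-\epsilon)}{\epsilon}$ (the paper writes $\frac{6(1-\epsilon)}{\epsilon}$, the same thing for $n\le 3$). The one genuine gap is the step you yourself flag: verifying hypothesis (1), that $R$ is generated by an actual curve. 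Your base-point-free route does not close as stated. With $\Delta=0$ the divisor $L-K_X=2L-a_*F$ is negative on $R$, hence not nef, so the terminal structure of $X$ alone does not feed into any BPF statement; and $(X,D)$ is only klt near $F$, so the klt versions of BPF/contraction theorems in characteristic $p>5$ do not apply either. (A smaller issue: your bigness computation asserts $(-K_X)^n>0$, which is not automatic for a big non-nef divisor; one should instead bound $L^n\ge a_*L^{n-1}\cdot F=a_*(-K_{X}|_F)^{n-1}>0$ using nefness of $L$.)

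The paper sidesteps semiampleness entirely by citing Keel's result \cite[Proposition 5.5.2]{keel}: since $K_X+D\sim_\bQ-\frac{\epsilon}{1-\epsilon}K_X$ is $\bQ$-effective (as $-K_X$ is big) and has negative intersection with $R$, that extremal ray is automatically spanned by the class of a curve. This is a characteristic-free statement about $\bQ$-effective divisors and extremal rays, and it requires no global singularity hypothesis on the pair, which is precisely what your approach is missing. If you replace your nef-and-big/semiample argument for hypothesis (1) with this reference, the rest of your proof goes through and coincides with the paper's.
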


\begin{proof}
By Lemma \ref{lem:F and Y}, $\rho(X)=2$. Let $l$ be the class of a line in the general fiber $F$ of $f$ and let $R$ be the other extremal ray of $\overline{NE}(X)$. By Lemma \ref{lem:f-reg along F}, there exists $D\sim_\bQ -\frac{1}{1-\epsilon}K_X$ such that $(X,D)$ is klt along $F$. We may assume that $X$ is not weak Fano, otherwise there is nothing to verify. In particular, $(-K_X\cdot l)>0$ while $(-K_X\cdot R)<0$. Since $K_X+D\sim_\bQ -\frac{\epsilon}{1-\epsilon}K_X$ is $\bQ$-effective and has negative intersection with $R$, we see that $R$ is generated by a curve on $X$ by \cite[Proposition 5.5.2]{keel}. By construction, the non-klt locus of $(X,D)$ is contained in some special fibers of $f$, hence since $R$ has positive intersection with $F$, it satisfies all the assumptions of Lemma \ref{lem:raylength}. It follows that $R$ is generated by a curve $C$ such that
\[
0<\frac{\epsilon}{1-\epsilon}(K_X\cdot C)=-(K_{X}+D\cdot C)\le2\dim X\le 6
\]
and thus $X$ satisfies \ref{ass:raylength} by taking $A=\frac{6(1-\epsilon)}{\epsilon}$.
\end{proof}

\section{Picard number one case} \label{sec:rho=1}

Now we consider the case of terminal threefolds of Picard number one. Of course in this case Theorem \ref{thm:weakbdd_m} is just a special case of  the weak BAB conjecture in positive characteristic. Unfortunately this conjecture is still open even in dimension three, so we need a somewhat different approach. Similar to the fiber type case, our strategy is to prove an appropriate version of the Koll\'ar-Shokurov connectedness principle in positive characteristic and then, under the assumption that $X$ has large anticanonical volume, construct a pair $(X,D)$ that violates this principle. We start by setting up the framework.

Let $(X,B)$ be a pair and $x\in X$ a closed point such that $(X,B)$ is strongly $F$-regular at $x$. Let $D_1,\cdots,D_r$ be divisors on $X$ whose set theoretic intersection $\cap D_i$ equals $\{x\}$ in a neighbourhood of $x$. Let $\mathbf{D}=(D_1,\cdots,D_r)$ and let $\Delta(\mathbf{D})\subseteq\bR^r$ be the closure of the set of all $r$-tuples $(t_1,\cdots,t_r)\in\bQ^r_{\ge 0}$ such that $(X,B+\sum_{i=1}^r t_i D_i)$ is sharply $F$-pure at $x$. It can be viewed as an analog of the log canonical threshold polytope (see e.g. \cite{lct-polytope}) in positive characteristic. Clearly $\Delta(\mathbf{D})$ is convex. Let $\succ$ be the lexicographic ordering on $\bR^r$, namely, $t\succ t'$ if and only if $t\neq t'$ and the first non-zero entry of $t-t'$ is positive. We may then talk about the {\it dominant vertex} of $\Delta(\mathbf{D})$, defined to be the unique point $v=(a_1,\cdots,a_r)\in\Delta(\mathbf{D})$ such that for all $v'\in\Delta(\mathbf{D})$ we have $v\succeq v'$. Let $\Gamma(\mathbf{D})=\sum_{i=1}^r a_i D_i$. Any divisor $\Gamma$ of this form (i.e. there exists $\mathbf{D}$ as above such that $\Gamma=\Gamma(\mathbf{D})$) will be called an {\it $F$-pure combination} with an isolated center at $x$. Intuitively, one may view $(X,B+\Gamma)$ as an analog of a pair with an isolated log canonical center at $x$.

We can also define successive approximations of $\Gamma(\mathbf{D})$ as follows (c.f. \cite[\S 3]{bpf}). Let $(X,B)$ and $\mathbf{D}$ be as before and $e>0$ a positive integer such that $(p^e-1)(K_X+B)$ has integral coefficients, we define the the associated {\it $F$-threshold functions}  $t_i(e)\,(i=1,\cdots,r)$ inductively by taking $t_{l+1}(e)$ to be the largest integer $m\ge0$ such that the trace map
\[\mathrm{Tr}^{e}:F_{*}^{e}(\mathcal{O}_{X}((1-p^{e})(K_{X}+B)-\sum_{i=1}^l t_i(e)D_i-mD))\rightarrow\mathcal{O}_{X}\]
is locally surjective around $x$. It is then clear that $\frac{1}{p^e-1}(t_1(e),\cdots,t_r(e))\in\Delta(\mathbf{D})$ and their limit as $e\rightarrow\infty$ is exactly the dominant vertex of $\Delta(\mathbf{D})$. Let $W$ be the scheme-theoretic intersection of all the $D_i$, then by construction for all $j=1,\cdots,r$, 
\[\mathrm{Tr}^{e}:F_{*}^{e}(\mathcal{O}_{X}((1-p^{e})(K_{X}+B)-\sum_{i=1}^r t_i(e)D_i-D_j))\rightarrow\mathcal{O}_{X}
\]
is not surjective around $x$, thus
\begin{equation} \label{eq:not_surj}
\mathrm{Tr}^{e}(F_{*}^{e}(\mathcal{O}_{X}((1-p^{e})(K_{X}+B)-\sum_{i=1}^r t_i(e)D_i)\cdot \mathcal{I}_W))\subseteq\mathfrak{m}_x.
\end{equation}

We can now state the connectedness result we will use in this section:

\begin{lem} \label{lem:weak connectedness-2}
Let $(X,B)$ be a pair such that $X$ is projective and $\bQ$-factorial and there exists an integer $e>0$ such that $(p^e-1)B$ has integral coefficients. Let $x$, $y$ be general points on $X$ and let $\Gamma_x$ \emph{(}resp. $\Gamma_y$\emph{)} be an $F$-pure combination with isolated center at $x$ \emph{(}resp. $y$\emph{)}. Then the divisor $-(K_X+B+\Gamma_x+\Gamma_y)$ is not ample.
\end{lem}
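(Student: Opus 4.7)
The plan is to adapt the trace-map/connectedness argument from Lemma \ref{lem:weak connectedness}, replacing the possibly $\bR$-coefficiented boundary $\Gamma_x+\Gamma_y$ by its integer $F$-threshold approximations so that, for each sufficiently large $e$, we may work with an honest trace map, and to derive the desired contradiction by producing a surjection from the $1$-dimensional space $H^0(X,\cO_X)$ onto the $2$-dimensional space $H^0(\cO_{\{x,y\}})$.

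Write $\Gamma_x=\sum_i a_i^x D_i^x$ and $\Gamma_y=\sum_j a_j^y D_j^y$, let $W_x:=\bigcap_i D_i^x$ be the scheme-theoretic intersection and let $Z_x\subseteq W_x$ be the connected component containing $x$, which by the isolated-center hypothesis is a $0$-dimensional subscheme supported at $\{x\}$; define $Z_y$ analogously. For $x,y$ sufficiently general one may arrange that $Z_x\cap Z_y=\emptyset$, that no $D_i^x$ passes through $y$ and that no $D_j^y$ passes through $x$. Fix $e$ such that $(p^e-1)B$ is integral, form the associated $F$-threshold functions $t_i^x(e), t_j^y(e)$, and set
\[
L_e:=(1-p^e)(K_X+B)-\sum_i t_i^x(e)\,D_i^x-\sum_j t_j^y(e)\,D_j^y.
\]
By the threshold definitions (and since the $D_j^y$ are trivial near $x$ and the $D_i^x$ trivial near $y$), the trace map $\mathrm{Tr}^e:F_*^e\cO_X(L_e)\to\cO_X$ is locally surjective at both $x$ and $y$, and applying \eqref{eq:not_surj} at each of $x$ and $y$ shows $\mathrm{Tr}^e(F_*^e(\cI_{Z_x\cup Z_y}\cdot\cO_X(L_e)))\subseteq\cI_{\{x,y\}}$. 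Consequently there is a commutative diagram of short exact sequences in which the induced map $\overline{\mathrm{Tr}}^e:F_*^e(\cO_X(L_e)\otimes\cO_{Z_x\cup Z_y})\to\cO_{\{x,y\}}$ is a surjective map of coherent sheaves supported on the $0$-dimensional scheme $\{x,y\}$, hence surjective on $H^0$ with target $k\oplus k$.

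Suppose for contradiction that $H:=-(K_X+B+\Gamma_x+\Gamma_y)$ is ample. Since $t_i^x(e)/(p^e-1)\to a_i^x$ and similarly for $y$, the $\bR$-class $[L_e]/(p^e-1)$ converges to $[H]$ in $N^1_\bR(X)$, and a uniform Fujita-type vanishing forces $H^1(X,\cI_{Z_x\cup Z_y}\otimes\cO_X(L_e))=0$ for $e\gg 0$. Hence the restriction map $H^0(F_*^e\cO_X(L_e))\twoheadrightarrow H^0(F_*^e(\cO_X(L_e)\otimes\cO_{Z_x\cup Z_y}))$ is surjective and, composing with $H^0(\overline{\mathrm{Tr}}^e)$, yields a surjection $H^0(F_*^e\cO_X(L_e))\twoheadrightarrow k\oplus k$. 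By commutativity of the diagram this surjection factors through $H^0(X,\cO_X)=k$, the required contradiction.

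The step I expect to be the main technical obstacle is the asymptotic $H^1$-vanishing for $\cI_{Z_x\cup Z_y}\otimes\cO_X(L_e)$: unlike in the proof of Lemma \ref{lem:weak connectedness}, the line bundle $\cO_X(L_e)$ is not a power of a fixed ample one, so Serre vanishing in its bare form does not apply. The remedy will be a uniform Fujita-type vanishing applied along the compact neighbourhood of $[H]$ in the ample cone traversed by $[L_e]/(p^e-1)$, justified by the fact that $L_e-m_0 A$ remains ample for $e\gg 0$ for any fixed ample $\bZ$-divisor $A$ and any sufficiently large integer $m_0$.
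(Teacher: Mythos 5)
Your proposal is correct and follows essentially the same route as the paper's own proof: approximate $\Gamma_x+\Gamma_y$ by the integer $F$-threshold divisors, use \eqref{eq:not_surj} to get a trace map landing in $\mathfrak{m}_x\cdot\mathfrak{m}_y$, and combine surjectivity on the zero-dimensional locus with Fujita-type vanishing (which the paper invokes in exactly the same way, for the same reason you identify) to force a surjection $H^0(X,\cO_X)\twoheadrightarrow k_x\oplus k_y$. The only cosmetic difference is your explicit care in taking the connected components $Z_x,Z_y$ and arranging that the divisor collections at $x$ and $y$ are disjoint, which the paper leaves implicit.
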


\begin{proof}
Since $x$, $y$ are general we may assume that they're smooth points. Let $\Gamma_x=\Gamma(\mathbf{D}_x)$ where $\mathbf{D}_x=(D_{1,x},\cdots,D_{r,x})$ and let $W_x=\cap_{i=1}^r D_{i,x}$. We have $\Supp(W_x)=\{x\}$. For sufficiently divisible integer $e>0$, let $t_{i,x}(e)$ be the $F$-threshold function associated to $\mathbf{D}_x$ at $x$ and let $\Gamma_x^{(e)}=\sum_{i=1}^r t_{i,x}(e)D_{i,x}$. Similarly we have corresponding objects indexed by $y$. Let $W=W_x\cup W_y$, then by (\ref{eq:not_surj}), we have
\[\mathrm{Tr}^{e}(F_{*}^{e}(\mathcal{O}_{X}((1-p^{e})(K_{X}+B)-\Gamma_x^{(e)}-\Gamma_y^{(e)})\cdot \mathcal{I}_W))\subseteq\mathfrak{m}_x\cdot \mathfrak{m}_y.
\] 
Hence for $L^{(e)}=\mathcal{O}_{X}((1-p^{e})(K_{X}+B)-\Gamma_x^{(e)}-\Gamma_y^{(e)})$ we have the following commutative diagram
\[\xymatrix{
0\ar[r] & F_{*}^{e}(\mathcal{I}_W\cdot L^{(e)})\ar[d]^{\mathrm{Tr}^{e}}\ar[r] & F_{*}^{e}L^{(e)}\ar[d]^{\mathrm{Tr}^{e}}\ar[r] & F_{*}^{e}(L^{(e)}|_W)\ar[d]^{\mathrm{Tr}^{e}}\ar[r] & 0 \\
0\ar[r] & \mathfrak{m}_x\cdot\mathfrak{m}_y\ar[r] & \cO_X\ar[r] & k_x\oplus k_y\ar[r] & 0.
}\]
By construction $\mathrm{Tr}^{e}:F_{*}^{e}L^{(e)}\rightarrow\cO_X$ is locally surjective around $x$ and $y$, thus the induced map $\mathrm{Tr}^{e}:F_{*}^{e}(L^{(e)}|_W)\rightarrow k_x\oplus k_y$ is also surjective. As $\dim W=0$, we get a surjection
\[H^0(W,F_{*}^{e}(L^{(e)}|_W))\twoheadrightarrow k_x\oplus k_y.
\]
On the other hand as $e$ goes to infinity $\frac{1}{p^e-1}\Gamma_x^{(e)}$ tends to $\Gamma_x$, thus if $-(K_X+B+\Gamma_x+\Gamma_y)$ is ample then by Fujita type vanishing we have 
\[H^1(X,F_{*}^{e}(\mathcal{I}_W\cdot L^{(e)}))=H^1(X,\mathcal{I}_W\cdot L^{(e)})=0,
\]
which implies that $H^0(X,F_{*}^{e}L^{(e)})\rightarrow H^0(W,F_{*}^{e}(L^{(e)}|_W))$ is also surjective. As in Lemma \ref{lem:weak connectedness}, we deduce that the natural restriction $H^0(X,\cO_X)\rightarrow k_x\oplus k_y$ is surjective, a contradiction. Hence $-(K_X+B+\Gamma_x+\Gamma_y)$ cannot be ample. 
\end{proof}

Let $X$ be a $\bQ$-factorial terminal Fano threefold of Picard number 1 such that $\epsilon(-K_X)>2+\epsilon$. Suppose that $\vol(-K_X)$ can be arbitrarily large. Then to derive a contradiction to Lemma \ref{lem:weak connectedness-2}, it suffices to find an $F$-pure combination $\Gamma$ with isolated center at a very general point $x\in X$ such that $\Gamma\sim_\bQ -\lambda K_X$ for some $\lambda<\frac{1}{2}$. The remaining part of this section will be devoted to the construction of such $\Gamma$. Roughly speaking, we will construct three divisors $D_1$, $D_2$, $D_3$ that are very singular at $x$ (so as to make $\Gamma$ small), and the main technical point is to cut down the dimension (at $x$) of their intersection (in general, singular divisors can be quite rigid and hard to deform).

The situation is very much like constructing isolated lc center in characteristic zero and it is always straightforward to come up with the first singular divisor. Let $x\in X$ be a very general point and let $N\in\bZ_{>0}$ be a sufficiently large constant that will be determined later. Suppose that $\vol(-K_X)>N^6$, then there exists an effective $\bQ$-divisor $D_1\sim_\bQ -K_X$ such that $\mult_x D_1>N^2$. Since $X$ has Picard number one, we may assume that $D_1$ is irreducible and write $D_1=t\Delta$ where $\Delta=\Supp(D_1)$. A priori the multiple $t$ can be large. Our first claim is that $t$ can be bounded in terms of $\epsilon$.

\begin{lem} \label{lem:multiple bounded}
Let $X$ be a $\bQ$-factorial Fano variety of Picard number one and dimension $n$. Assume that $X$ is smooth in codimension two and $\epsilon(-K_X)>n-1+\epsilon$. Then there exists a constant $a=a(n,\epsilon)$ depending only on $n$ and $\epsilon$ such that for all divisor $D\subseteq X$ passing through a general point we have $-K_X\sim_\bQ tD$ for some $t<a$.
\end{lem}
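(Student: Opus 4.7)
The plan is to show that if $t$ exceeds a threshold depending only on $n$ and $\epsilon$, then a restriction-plus-adjunction argument forces $D\cong\bP^{n-1}$, after which a direct numerical computation on projective space pins down $t\leq n+1$.

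Since $X$ has Picard number one, write $-K_X\sim_{\bQ}tD$ for some $t>0$; we may assume $D$ is prime. Pick a point $x\in D$ which is very general in $X$: because $X$ is smooth in codimension two and $D$ is reduced, $x$ is a smooth point of both $X$ and $D$, with $\epsilon(-K_X,x)>n-1+\epsilon$. We may assume $t>(n-1+\epsilon)/\epsilon$, otherwise there is nothing to prove. Let $\nu:\tilde D\to D$ be the normalization (an isomorphism near $x$), and let $\tilde x\in\tilde D$ be the preimage of $x$. By Lemma \ref{lem:restrict_m} and pullback, $\epsilon_m(\nu^*(-K_X|_D),\tilde x)>n-1+\epsilon$. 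Proceeding exactly as in the proof of Lemma \ref{lem:F and Y} (via \cite[Theorem 1.1]{general_fiber}), adjunction produces an effective Weil divisor $B$ on $\tilde D$ with $K_{\tilde D}+B\sim_{\bQ}\nu^*(K_X+D)|_D$. Since $K_X+D\sim_{\bQ}(1-1/t)K_X$, this rewrites as
\[
-K_{\tilde D}-B\sim_{\bQ}(1-1/t)\,\nu^*(-K_X|_D),
\]
so $-K_{\tilde D}-B$ is ample and
\[
\epsilon_m(-K_{\tilde D}-B,\tilde x)=(1-1/t)\,\epsilon_m(\nu^*(-K_X|_D),\tilde x)>(1-1/t)(n-1+\epsilon)>n-1.
\]

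Applying \cite[Theorem 3]{Pn_char_p} to the pair $(\tilde D,B)$, we conclude $\tilde D\cong\bP^{n-1}$ and $B=0$; in particular the conductor of $\nu$ vanishes, $D$ is normal, $D\cong\bP^{n-1}$, and adjunction becomes the strict identity $-K_D=(t-1)D|_D$ on $\bP^{n-1}$. Since $-K_{\bP^{n-1}}=nH$ for the hyperplane class $H$, we obtain $D|_D=\tfrac{n}{t-1}H$. The key remaining point is that $D|_D$ lies in $\bZ\cdot H$. Since $X$ is smooth in codimension two, $D\cap\mathrm{Sing}(X)$ has codimension at least two in $D\cong\bP^{n-1}$; on the complement $D^{\circ}=D\setminus(D\cap\mathrm{Sing}(X))$ the Weil divisor $D$ is Cartier, so $\cO_X(D)|_{D^{\circ}}$ is a line bundle whose class lies in $\Pic(D^{\circ})=\Pic(\bP^{n-1})=\bZ\cdot H$ by purity of the Picard group on smooth projective varieties of dimension $\geq 2$. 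This line bundle extends uniquely to a line bundle on $\bP^{n-1}$ representing $D|_D$, so $D|_D=mH$ with $m\in\bZ$; ampleness of $D|_D$ forces $m\geq 1$. Hence $t=1+n/m\leq n+1$, and the lemma holds with $a=\max\{n+2,(n-1+\epsilon)/\epsilon+1\}$.

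The principal subtlety in this plan is the integrality of $m$: this is precisely where the smooth-in-codimension-two hypothesis plays its essential role, since otherwise $D|_D$ could be an arbitrary positive rational multiple of $H$ and the final step $t=1+n/m$ would give no bound. A secondary care is needed in applying \cite[Theorem 3]{Pn_char_p} to the pair $(\tilde D,B)$ in the same form used in Lemma \ref{lem:F and Y}, so that the conclusion $\tilde D\cong\bP^{n-1}$ comes together with $B=0$ (and hence with $D$ being normal and the adjunction relation being an equality).
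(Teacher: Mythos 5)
Your proof is correct in substance and its first half coincides with the paper's: normalize $D$, use adjunction together with Lemma \ref{lem:restrict_m} to transport the Seshadri bound to the pair $(\tD,B)$, and invoke \cite[Theorem 3]{Pn_char_p} to get $\tD\cong\bP^{n-1}$ and $B=0$ once $t>(n-1+\epsilon)/\epsilon$. Where you genuinely diverge is the endgame. The paper concludes by computing $\epsilon(-K_{\bP^{n-1}})=n$, feeding this back through the restriction to obtain $\epsilon(-K_X,x)>n$, hence $X\cong\bP^n$, contradicting $t>n+1$; this step treats the inequality of Lemma \ref{lem:restrict_m} as an equality. You instead use the smooth-in-codimension-two hypothesis a second time: $\cO_X(D)$ is an honest line bundle on $D$ off a codimension-two subset, so by purity of $\Pic$ on $\bP^{n-1}$ one gets $D|_D=mH$ with $m\in\bZ_{\ge1}$, and adjunction $nH\sim_\bQ(t-1)mH$ forces $t=1+n/m\le n+1$. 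This integrality argument is self-contained, avoids the delicate equality of Seshadri constants under restriction in the paper's finish, and yields the same constant $a$. One slip you should repair: a very general point $x$ of $X$ lying on $D$ need \emph{not} be a smooth point of $D$ --- in the paper's application $D=\Supp(D_1)$ has multiplicity $>N^2/a$ at exactly such a point --- so "$X$ smooth in codimension two and $D$ reduced" does not justify your smoothness claim. The fix is what the paper does: run the argument at a general point of $D$ (where $D$ is smooth and $\nu$ is an isomorphism), while assuming, as the paper implicitly does, that the bound $\epsilon(-K_X,\cdot)>n-1+\epsilon$ persists at such a point. Since your concluding step never uses the point $x$, this change affects nothing downstream.
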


\begin{proof}
We may assume that $D$ is irreducible. Let $\nu:\tD\rightarrow D$ be the normalization of $D$ and let $\Delta$ be the conductor divisor on $\tD$. Since $X$ is smooth in codimension two, by adjunction we have $K_{\tD}+\Delta=\nu^*K_D=\nu^*(K_X+D)$. Let $x$ be a general point on $D$. By \cite[Theorem 3]{Pn_char_p}, we have $\epsilon(-K_X-D,x)\le n+1$, hence if $t>n+1$, then by the definition of Seshadri constant we obtain
\begin{equation} \label{eq:compare_Seshadri}
\epsilon(-K_{\tD}-\Delta,x)=\epsilon(-K_X-D,x)>(1-\frac{1}{t})(n-1+\epsilon). 
\end{equation}
So if $(1-\frac{1}{t})(n-1+\epsilon)>n-1$, then by \cite[Theorem 3]{Pn_char_p} again we get $\tD\cong\bP^{n-1}$ and $\Delta=0$. Substituting back to (\ref{eq:compare_Seshadri}) we see that $\epsilon(-K_X,x)>\epsilon(-K_X-D,x)=n$, which forces $X\cong\bP^n$ and contradicts $t>n+1$. It follows that we have either $t\le n+1$ or $(1-\frac{1}{t})(n-1+\epsilon)\le n-1$. In either case the existence of the constant $a(n,\epsilon)$ is clear.
\end{proof}

The following example shows that the assumptions in the lemma are necessary.

\begin{expl}
Let $X=\bP(1^2,d^{n-1})$ and let $H$ be the ample generator of $\mathrm{Cl}(X)$. Then for any point $x\in X$ there exists a divisor $D\sim H$ containing $x$ but $-K_X\sim (2+d(n-1))D$. Note that $\epsilon(-K_X)=n-1+\frac{2}{d}$.
\end{expl}

In the sequel, fix a constant $a=a(3,\epsilon)$ that satisfies the conclusion of the previous lemma. If $N\gg a$, then by the previous lemma the singularities of $D_1$ mainly come from the singularity of $\Delta$, which has codimension at least two. Our next claim is that if $N$ is sufficiently large, then we can find another divisor $D_2\sim_\bQ -K_X$, whose support does not contain $\Delta$ (so in particular, $\dim D_1\cap D_2 \le 1$), such that $\mult_x D_2>bN$ for some (fixed) constant $b>0$. To give a more precise statement, we need some definition.

\begin{defn}[c.f. \cite{schwede_adjoint}] 
Let $(X,D)$ be a pair. The test module of $(X,D)$ is defined to be
\[\tau(K_X,D)=\sum_{e\ge0}\mathrm{Tr}^e_X(F_{*}^{e}(\omega_{X}(-\left\lceil p^{e}D\right\rceil ))) \subseteq \omega_X.
\]
\end{defn}

It is clear from our discussion in Section \ref{sec:test_ideal} that over the smooth locus of $X$, the test module coincides with $\tau(X,D)\cdot\omega_X$ where $\tau(X,D)$ is the test ideal of $(X,D)$.

\begin{lem} \label{lem:global generation}
Let $(X,D)$ be a pair and $L$ a Weil divisor on $X$ such that $L-D$ is ample. Let $x\in X$ be a smooth point such that $\epsilon_F(L-D,x)>1$. Then the sheaf $\tau(K_X,D)\otimes\cO_X(L)$ is globally generated at $x$, where $\tau(K_X,D)$ is the test module of the pair $(X,D)$. 
\end{lem}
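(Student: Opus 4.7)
My plan is to bootstrap from a global generation statement for a Frobenius pushforward of an adjoint line bundle, following the strategy of [non-nef] but localized around $x$.

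First I would use the defining description of the test module together with the projection formula to reduce to a clean global generation problem. By definition, for $e \gg 0$
\[
\tau(K_X,D) = \mathrm{Tr}^{e}_X\bigl(F^e_* \omega_X(-\lceil p^e D \rceil)\bigr),
\]
and twisting by $\cO_X(L)$ (which is $\cO_X$-linear) together with the projection formula yields a sheaf-level surjection
\[
F^e_* \omega_X(p^e L - \lceil p^e D \rceil) \twoheadrightarrow \tau(K_X,D) \otimes \cO_X(L).
\]
Global generation at $x$ passes through such a surjection (since after $\otimes k(x)$ it remains surjective, and this fits into a commutative square with the evaluation maps from global sections), so it suffices to show that $F^e_* \omega_X(p^e L - \lceil p^e D \rceil)$ is globally generated at $x$ for some $e \gg 0$.

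Next, the $\cO_X$-module structure on Frobenius pushforwards at a smooth point satisfies $\mathfrak{m}_x\cdot (F^e_*\cF)_x = \mathfrak{m}_x^{[p^e]}\cF_x$, so with $\cF=\omega_X(p^e L-\lceil p^e D\rceil)$ global generation of $F^e_*\cF$ at $x$ is equivalent to surjectivity of
\[
H^0(X,\cF)\to \cF_x/\mathfrak{m}_x^{[p^e]}\cF_x.
\]
Via the short exact sequence $0\to \mathfrak{m}_x^{[p^e]}\cF\to \cF\to \cF/\mathfrak{m}_x^{[p^e]}\cF\to 0$, this in turn follows from the vanishing $H^1(X, \mathfrak{m}_x^{[p^e]}\cdot\omega_X(p^e L-\lceil p^e D\rceil))=0$. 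I would establish this by writing $p^e L-\lceil p^e D\rceil=p^e(L-D)-\Delta_e$ with $\Delta_e$ effective of coefficients in $[0,1)$ and invoking the Frobenius-Seshadri hypothesis $\epsilon_F(L-D,x)>1$: this is precisely the input of [non-nef, Theorem A], which supplies the required vanishing for $\mathfrak{m}_x^{[p^e]}\otimes\omega_X(p^e(L-D))$ for suitable $e$. The perturbation by $-\Delta_e$ is then absorbed via Fujita-type asymptotic Serre vanishing, since $L-D$ is ample and the $\Delta_e$ have uniformly bounded norm.

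The main obstacle I anticipate is controlling this perturbation $\Delta_e$, which varies with $e$ and is not a fixed pullback. Murayama's original argument is phrased for a fixed ample divisor, so adapting it to accommodate the rounding terms $\lceil p^e D\rceil - p^e D$ uniformly in $e$ is what distinguishes the present lemma as a genuine \emph{local} version of [non-nef, Theorem A]: one only needs positivity of $L-D$ near $x$ and the Frobenius-Seshadri bound there, not a global separation statement for the adjoint bundle. Once this local vanishing is in hand, the global generation of $\tau(K_X,D)\otimes\cO_X(L)$ at $x$ follows immediately from the reductions above.
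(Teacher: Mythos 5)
Your opening reductions match the paper's: passing from global generation of $\tau(K_X,D)\otimes\cO_X(L)$ at $x$, via the trace surjection and the identification $\mathfrak{m}_x\cdot F^e_*\cF=F^e_*(\mathfrak{m}_x^{[p^e]}\cF)$, to the surjectivity of the restriction map
\[
H^0(X,\omega_X(p^eL-\lceil p^eD\rceil))\longrightarrow \omega_X(p^eL-\lceil p^eD\rceil)\otimes\cO_X/\mathfrak{m}_x^{[p^e]}
\]
for some $e\gg0$. The gap is in how you establish this surjectivity. You derive it from the vanishing $H^1(X,\mathfrak{m}_x^{[p^e]}\cdot\omega_X(p^eL-\lceil p^eD\rceil))=0$ and assert that this is ``precisely the input'' of the hypothesis $\epsilon_F(L-D,x)>1$ via \cite[Theorem A]{non-nef}. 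Neither assertion holds. The Frobenius--Seshadri constant is defined through surjectivity of restriction maps on global sections (separation of Frobenius jets), which is strictly weaker than the $H^1$-vanishing of the corresponding ideal-sheaf twist: the latter is equivalent to that surjectivity \emph{together with} $H^1(X,\omega_X(p^eL-\lceil p^eD\rceil))=0$, and in characteristic $p$ there is no Nadel or Kawamata--Viehweg type theorem converting positivity of $L-D$ into either statement — avoiding such vanishings is the whole point of working with $\epsilon_F$. Moreover \cite[Theorem A]{non-nef} is itself a global generation statement proved by multiplying sections against a fixed globally generated twist, not a vanishing theorem for $\mathfrak{m}_x^{[p^e]}$; and Fujita vanishing cannot absorb anything here, since it requires the coherent sheaf to be fixed while $\mathfrak{m}_x^{[p^e]}$ varies with $e$. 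So your route reduces the desired surjectivity to a strictly stronger, unavailable vanishing.

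The correct way to finish — and what the paper does — is to use the definition of $\epsilon_F$ directly. Fix $m\gg0$ with $(p^e-m)(L-D)$ Cartier and $\omega_X(mL-\lceil mD\rceil)$ globally generated (Serre, as $L-D$ is ample); then $p^eL-\lceil p^eD\rceil=(p^e-m)(L-D)+(mL-\lceil mD\rceil)$, and $\epsilon_F(L-D,x)>1$ gives, for $e\gg0$, the surjectivity of
\[
H^0(X,\cO_X((p^e-m)(L-D)))\longrightarrow H^0(X,\cO_X((p^e-m)(L-D))\otimes\cO_X/\mathfrak{m}_x^{[p^e]}).
\]
Multiplying sections of the two factors yields the required surjectivity for $\omega_X(p^eL-\lceil p^eD\rceil)$ with no cohomology vanishing at all. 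This also disposes of the ``main obstacle'' you flag: the rounding terms $\lceil p^eD\rceil-p^eD$ are carried entirely by the fixed globally generated summand $\omega_X(mL-\lceil mD\rceil)$, not by perturbing a vanishing theorem.
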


Here the notation $\epsilon_F(L,x)$ stands for the Frobenius-Seshadri constants \cite{F-seshadri,takumi} of the divisor $L$ at the smooth point $x$.

\begin{proof}
Let $e\gg 0$ be a sufficiently divisible integer and consider the following commutative diagram
\[\xymatrix@C=1em{
0\ar[r] & F_{*}^{e}(\mathfrak{m}_{x}^{[p^{e}]}\cdot\omega_{X}(-\left\lceil p^{e}D\right\rceil ))\ar[r]\ar[d]^{\mathrm{Tr}^{e}} & F_{*}^{e}(\omega_{X}(-\left\lceil p^{e}D\right\rceil ))\ar[r]\ar[d]^{\mathrm{Tr}^{e}} & F_{*}^{e}(\omega_{X}(-\left\lceil p^{e}D\right\rceil )\otimes\mathcal{O}_{X}/\mathfrak{m}_{x}^{[p^{e}]})\ar[r]\ar[d]^{\mathrm{Tr}^{e}} & 0\\
0\ar[r] & \mathfrak{m}_{x}\cdot\tau(K_X,D)\ar[r] & \tau(K_X,D)\ar[r] & \tau(K_X,D)\otimes k_{x}\ar[r] & 0.
}
\]
Since $x$ is a smooth point and $e\gg 0$, the trace map $\mathrm{Tr}^{e}:F_{*}^{e}(\omega_{X}(-\left\lceil p^{e}D\right\rceil ))\rightarrow\tau(X,D)\omega_{X}$ is locally surjective around $x$, hence after tensoring with $L$, we get another commutative diagram
\begin{equation} \label{eq:diagram}
\xymatrix{
F_{*}^{e}(\omega_{X}(p^{e}L-\left\lceil p^{e}D\right\rceil ))\ar[r]\ar[d] & F_{*}^{e}(\omega_{X}(p^{e}L-\left\lceil p^{e}D\right\rceil )\otimes\mathcal{O}_{X}/\mathfrak{m}_{x}^{[p^{e}]})\ar[d]^{\phi}\\
\tau(K_X,D)\otimes\cO_X(L)\ar[r] & \tau(K_X,D)\otimes\cO_X(L)\otimes k_{x}
}
\end{equation}
whose vertical maps are surjective around $x$. In particular, $\phi$ induces a surjection on global sections since both sheaves in question have zero-dimensional support. On the other hand, since $L-D$ is ample and $\epsilon_F(L-D,x)>1$, there exists $m\in\bZ_{\ge0}$ such that $(p^e-m)(L-D)$ is Cartier, $\omega_X(mL-\lceil mD \rceil )$ is globally generated and 
\[H^{0}(X,\cO_X((p^{e}-m)(L-D)))\rightarrow H^{0}(X,\cO_X((p^{e}-m)(L-D))\otimes\mathcal{O}_{X}/\mathfrak{m}_{x}^{[p^{e}]})
\]
is surjective. Tracing through the diagram, it follows that the two horizontal maps in (\ref{eq:diagram}) also induce surjection on global sections. In particular, $\tau(K_X,D)\otimes\cO_X(L)$ is globally generated at $x$.
\end{proof}

To construct the required divisor $D_2$ (and sometimes even $D_3$), we separate into two cases.

First suppose that $\mult_y D_1<N$ for all $y\neq x$ around $x$. Then by Lemma \ref{lem:mult and test ideal}, the test ideal $\tau(X,\frac{1}{N}D_1)\subseteq \mathfrak{m}_x^{N-2}$ and is trivial in a punctured neighbourhood of $x$. By \cite[Proposition 2.12]{F-seshadri}, \[\epsilon_F(-K_X,x)\ge \frac{1}{3}\epsilon(-K_X,x)>\frac{2}{3}.
\]
Thus if $N\gg 0$ and $L=-2K_X$ then $\epsilon_F(L-\frac{1}{N}D_1,x)>1$. Therefore by Lemma \ref{lem:global generation}, $\tau(K_X,\frac{1}{N}D_1)\otimes\cO_X(L)$ is globally generated at $x$. In particular, as $\tau(K_X,\frac{1}{N}D_1)\otimes\cO_X(L)=\tau(X,\frac{1}{N}D_1)\cO_X(-K_X)$ over the smooth locus of $X$, we get divisors $D_i\sim -K_X$ ($i=1,2,3$) such that locally $D_1\cap D_2 \cap D_3$ is supported at $x$ and $\mult_x D_i\ge N-2$. Let $\mathbf{D}=(D_1,D_2,D_3)$. By Lemma \ref{lem:mult and test ideal}, for any $t\in \Delta(\mathbf{D})$ we have $t_i\le \frac{3}{N-2}$, which gives $\Gamma(\mathbf{D})\le -\frac{6}{N-2}K_X<-\frac{1}{2}K_X$ as desired.

Hence we may assume that there is a curve $C\subseteq X$ containing $x$ such that $\mult_y D_1\ge N$ for all $y\in C$ and by Lemma \ref{lem:multiple bounded}, $\mult_z D_1<a$ if $z\not\in C$ (all statements are local around $x$). Replacing $x$ by a general point of $C$ we may also assume that $C$ is smooth at $x$. Let $0<c<\min\{\frac{1}{2},\frac{1}{a}\}$. By Lemma \ref{lem:mult and test ideal} again, the test ideal $\tau(X,cD_1)\subseteq \mathfrak{m}_y^{\lfloor cN \rfloor -2}$ for all $y\in C$ and is trivial outside of $C$. Let $L=-3K_X$, then as in the previous case we have $\epsilon_F(L-cD_1,x)>\epsilon_F(-2K_X,x)>1$, thus $\tau(K_X,cD_1)\otimes\cO_X(-3K_X)$ is globally generated at $x$. In particular, we see that there exists a constant $b>0$ depending only on $\epsilon$ and a divisor $D_2\sim_\bQ -K_X$ whose support does not contain $\Supp(D_1)$ such that $\mult_y(D_i)>bN$ for all $y\in C\subseteq D_1\cap D_2$. 

Let $0<\delta<\frac{1}{2}$ be any rational number such that $\delta(2+\epsilon)>1$. In particular we get $\epsilon(-\delta K_X,x)>1$. By the definition of Seshadri constants, we have $(-\delta K_X\cdot C)>\mult_x C\ge 1$, hence there exists an effective divisor $D_3\sim_\bQ -K_X$ whose support doesn't contain $C$ such that $\mult_x(D_3|_C)>\delta^{-1}$. As before we may assume that $D_3$ is irreducible. Let $\mathbf{D}=(D_1,D_2,D_3)$. Our last claim is

\begin{lem}
$\Gamma(\mathbf{D})\sim_\bQ -\lambda K_X$ where $\lambda\le \frac{6}{N}+\delta$.
\end{lem}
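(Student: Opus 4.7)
Since each $D_i\sim_{\bQ}-K_X$, we have $\Gamma(\mathbf{D})\sim_{\bQ}-(a_1+a_2+a_3)K_X$, so the lemma amounts to the inequality $a_1+a_2+a_3\le \tfrac{6}{N}+\delta$. The plan is to bound the three coordinates of the dominant vertex separately, using that $(X,\sum a_iD_i)$ is sharply $F$-pure at $x$ and that $X$ is smooth at the very general point $x$. Sharp $F$-purity at a smooth point implies log canonicity, so by the contrapositive of Lemma~\ref{lem:mult and test ideal}(2) one obtains the multiplicity inequality
\[
\sum_{i=1}^{3} a_i\,\mult_z D_i \le 3
\]
at every closed point $z$ in a sufficiently small neighborhood of $x$.

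Applied at $z=x$ this gives $a_1\mult_xD_1\le 3$, and since $\mult_xD_1>N^2$ we conclude $a_1<3/N^2$. For $a_2$, choose a generic point $y\in C$ in a punctured neighborhood of $x$; because $D_3\not\supset C$ the scheme $D_3\cap C$ is zero dimensional, so $y\notin D_3$ and the inequality at $y$ reduces to $a_1\mult_yD_1+a_2\mult_yD_2\le 3$. Combined with $\mult_yD_1\ge N$, $\mult_yD_2>bN$, and the bound on $a_1$, this forces $a_2=O(1/N)$. Taking $N$ sufficiently large relative to the previously fixed constants $a$, $b$, and $t$ we may arrange $a_1+a_2\le \tfrac{6}{N}$.

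The bound $a_3\le \delta$ is the delicate part and exploits the curve $C$ via a blow-up. Let $\pi:Y\to X$ be the blow-up of the smooth curve $C$, with exceptional divisor $E$ a $\bP^1$-bundle over $C$. Since $X$ is smooth along $C$ one has $K_Y=\pi^*K_X+E$, and writing $\pi^*D_i=\tilde D_i+e_iE$ with $e_i=\mult_{\eta_C}D_i$, we have $e_1=t<a$, $e_2\ge 1$ (some prime component of $D_2$ contains $C$) and $e_3=0$ (since $D_3\not\supset C$). The pulled-back pair $\bigl(Y,\sum a_i\tilde D_i+(a_1t+a_2e_2-1)E\bigr)$ must again be sharply $F$-pure at every point lying above $x$. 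Because $\tilde D_3=\pi^*D_3$ meets the fiber $E|_x\cong\bP^1$ in a zero cycle of total degree $(D_3\cdot C)_x=\mult_x(D_3|_C)>1/\delta$, one can locate a point $\bar x$ on this fiber at which, possibly after one further blow-up of $\bar x$, the pointwise multiplicity of the transform of $D_3$ accumulates to at least $1/\delta$. Applying the multiplicity inequality at this infinitely near point and absorbing the contributions from $E$, $\tilde D_1$ and $\tilde D_2$ into terms of size $O(1/N)$ via the estimates in Steps~1--2, we obtain $a_3\le \delta+O(1/N)$.

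Summing the three estimates yields $\lambda=a_1+a_2+a_3\le \tfrac{6}{N}+\delta$ after $N$ is taken large enough. The main obstacle is the last step: the inequality $\mult_x(D_3|_C)>1/\delta$ a priori only records a total degree on the $\bP^1$-fiber of $E$ above $x$, and one must convert this into a genuine pointwise multiplicity bound at a single (possibly infinitely near) point of $Y$ in order to invoke the sharp $F$-pure multiplicity constraint there. Once this pointwise accumulation is achieved, the bound $a_3\le \delta$ is essentially a codimension one adjunction-type statement along the curve $C$.
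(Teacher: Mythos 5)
Your bound on $a_1+a_2$ is essentially the paper's: sharp $F$-purity at a smooth point forces $\sum t_i\mult_z D_i\le 3$ near $x$, and the multiplicities $\mult_x D_1>N^2$, $\mult_y D_2>bN$ give $a_1+a_2=O(1/N)$. That part is fine (modulo the constant $b$, which the paper also glosses over). The problem is the third step, and you have correctly identified where it breaks: the conversion of $\mult_x(D_3|_C)>1/\delta$ into a pointwise multiplicity at some (infinitely near) point. This conversion is not merely an unfinished technicality -- it is impossible in general. Take $D_3$ smooth at $x$ but tangent to $C$ to high order; then $\mult_x(D_3|_C)$ is large while the strict transform of $D_3$ has multiplicity $1$ at every point of every blow-up, so no "accumulation" ever occurs. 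More fundamentally, the inequality $a_3\le\delta$ is simply false if one only remembers that $(X,a_1D_1+a_2D_2+a_3D_3)$ is sharply $F$-pure (or log canonical) near $x$ with $a_1,a_2$ small: in the tangency example just described, that pair is fine for $a_3$ up to $1$, far exceeding $1/\mult_x(D_3|_C)$. So no argument based solely on multiplicity constraints at points of $X$ or its blow-ups can work.

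What your proposal never uses -- and what the paper's proof hinges on -- is the lexicographic definition of the dominant vertex, encoded in \eqref{eq:not_surj}: after maximizing $t_1(e),t_2(e)$, the trace of sections vanishing on $W=\Delta_1\cap\Delta_2$ already lands in $\mathfrak{m}_x$, so $W$ (supported on $C$ near $x$) behaves like an isolated center and one is effectively allowed to "restrict to $C$". The paper then runs a direct trace-map computation: with $f$ a local equation of $\Delta_3$ and $m(e)$ minimal with $m(e)\mult_x(\Delta_3|_C)\ge p^e$, one has $f^{m(e)}\in\mathfrak{m}_x^{[p^e]}+\cI_C$ because $\mathfrak{m}_{C,x}^{p^e}=\mathfrak{m}_{C,x}^{[p^e]}$ on the smooth curve $C$; raising to the $p^r$-th power (where $\cI_C^{[p^r]}\subseteq\cI_W$) puts $f^{p^rm(e)}$ into $\mathfrak{m}_x^{[p^{e+r}]}+\cI_W$, whence $\nu_3(e+r)\le p^rm(e)$ and $a_3\le 1/\mult_x(D_3|_C)<\delta$. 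Your blow-up heuristic is a reasonable characteristic-zero intuition (subadjunction to the center $C$), but to make it rigorous here you would need to engage with the Frobenius-threshold functions and \eqref{eq:not_surj}; as written, the key step is both unproved and unprovable.
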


\begin{proof}
Let $\Delta_i$ be the support of $D_i$ and write $D_i=b_i \Delta_i$ ($i=1,2,3$). Let $v=(a_1,a_2,a_3)$ be the dominant vertex of $\Delta(\mathbf{D})$ and $\nu_i(e)$ the $F$-threshold function at $x$ for the divisors $\Delta_i$ as before. By Lemma \ref{lem:mult and test ideal} it is clear that $a_1,a_2\le \frac{3}{N}$. We claim that $a_3\le \delta$. Let $W$ be the scheme-theoretic intersection of $\Delta_1$ and $\Delta_2$. Since $W$ is supported at $C$ (at least locally around $x$), there exists an integer $r>0$ such that $\cI_C^{[p^r]}\subseteq \cI_W$. For each $e\in \bZ_{>0}$, let $m(e)$ be the smallest integer such that $m(e)\cdot \mult_x(\Delta_3|_C)\ge p^e$. If $(f=0)$ is the local defining equation of $\Delta_3$, then we have
\[f^{m(e)}\subseteq \mathfrak{m}_x^{p^e} + \cI_C = \mathfrak{m}_x^{[p^e]} + \cI_C 
\]
where the second equality holds since $\mathfrak{m}_{C,x}^p=\mathfrak{m}_{C,x}^{[p]}$ on the smooth curve $C$. It follows that
\begin{equation} \label{eq:Delta_3}
f^{p^r m(e)}\subseteq (\mathfrak{m}_x^{[p^e]})^{[p^r]} + \cI_C^{[p^r]} \subseteq \mathfrak{m}_x^{[p^{e+r}]}+ \cI_W.
\end{equation}
It is clear that
\[\mathrm{Tr}^{e}(F_{*}^{e}(\mathcal{O}_{X}((1-p^{e})(K_{X}+B)-\sum_{i=1}^2 t_i(e)\Delta_i)\cdot \mathfrak{m}_x^{[p^e]}))\subseteq\mathfrak{m}_x,
\]
hence combining with (\ref{eq:not_surj}) and (\ref{eq:Delta_3}) we get
\[\mathrm{Tr}^{e+r}(F_{*}^{e+r}(\mathcal{O}_{X}((1-p^{e+r})(K_{X}+B)-\sum_{i=1}^2 t_i(e+r)\Delta_i-p^r m(e))\Delta_3))\subseteq\mathfrak{m}_x.
\]
Therefore, $\nu_3(e+r)\le p^r m(e)$ and as $a_3=\lim_{e\rightarrow\infty} \frac{\nu_3(e+r)}{b_3(p^{e+r}-1)}$, we deduce that 
\[a_3\le \frac{1}{\mult_x (D_3|_C)}<\delta
\]
as claimed. The lemma now follows as by our construction $D_i\sim_\bQ -K_X$ for each $i$.
\end{proof}

Summing up, we eventually have

\begin{thm} \label{thm:rho=1}
The set of $\bQ$-factorial terminal Fano threefolds $X$ of Picard number one such that $\epsilon(-K_X,x)>2+\epsilon$ for some $x$ is weakly bounded.
\end{thm}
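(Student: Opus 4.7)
The plan is to argue by contradiction, bundling together the constructions carried out in the preceding discussion into a single quantitative bound that contradicts Lemma \ref{lem:weak connectedness-2}. Suppose that $\vol(-K_X)$ can be made arbitrarily large over the class of threefolds in question. Fix once and for all a rational number $\delta$ with $\tfrac{1}{2+\epsilon}<\delta<\tfrac{1}{2}$, which is possible since $\epsilon>0$ forces $\tfrac{1}{2+\epsilon}<\tfrac{1}{2}$, and then choose an integer $N$ large enough that $\tfrac{6}{N}+\delta<\tfrac{1}{2}$ and also $N\gg a(3,\epsilon)$, where $a(3,\epsilon)$ is the constant provided by Lemma \ref{lem:multiple bounded}. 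Pick $X$ in the hypothetical sequence with $\vol(-K_X)>N^6$ and a very general (hence smooth, as $X$ is terminal) point $x\in X$.

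The first step is a standard asymptotic Riemann--Roch count: since $h^0(X,\cO_X(-mK_X))=\tfrac{1}{6}\vol(-K_X)m^3+O(m^2)$ while imposing multiplicity $\ge sm$ at a smooth point costs $\tfrac{1}{6}(sm)^3+O(m^2)$ conditions, the inequality $\vol(-K_X)>N^6$ produces $D_1\sim_{\bQ}-K_X$ with $\mult_x D_1>N^2$. Since $\rho(X)=1$, I may write $D_1=t\Delta$ with $\Delta$ prime, and because $X$ is terminal (hence smooth in codimension two), Lemma \ref{lem:multiple bounded} gives $t<a(3,\epsilon)\ll N$. The body of $D_1$'s singularity is thus concentrated in codimension at least two, which legitimises the dichotomy on $\mult_y D_1$ in the paragraphs preceding the theorem.

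Now I run that dichotomy verbatim. If $\mult_y D_1<N$ for all $y\neq x$ near $x$, then Lemma \ref{lem:mult and test ideal} places $\tau(X,\tfrac{1}{N}D_1)$ in $\mathfrak{m}_x^{N-2}$ with trivial stalks elsewhere, and Lemma \ref{lem:global generation} applied to $L=-2K_X$ (whose nonzero $\epsilon_F$ is secured by Demailly's comparison together with the hypothesis $\epsilon(-K_X,x)>2$) produces $D_2,D_3\sim -K_X$ whose intersection with $D_1$ is $0$-dimensional at $x$ and each of which has multiplicity $\ge N-2$ at $x$; then Lemma \ref{lem:mult and test ideal} forces every $(t_1,t_2,t_3)\in\Delta(\mathbf{D})$ to satisfy $t_i\le\tfrac{3}{N-2}$, whence $\Gamma(\mathbf{D})\sim_\bQ-\lambda K_X$ with $\lambda\le\tfrac{6}{N-2}<\tfrac{1}{2}$. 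Otherwise a curve $C\ni x$ carries the singularity of $D_1$; after replacing $x$ by a general point of $C$ so that $C$ is smooth there, the same machinery with $L=-3K_X$ and Lemma \ref{lem:global generation} yields $D_2\sim_\bQ -K_X$ singular along $C$, while the Seshadri hypothesis $\epsilon(-\delta K_X,x)>1$ produces $D_3\sim_\bQ -K_X$ not containing $C$ with $\mult_x(D_3|_C)>\delta^{-1}$. The final Lemma of the excerpt then gives $\Gamma(\mathbf{D})\sim_\bQ -\lambda K_X$ with $\lambda\le\tfrac{6}{N}+\delta<\tfrac{1}{2}$.

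In both branches I obtain an $F$-pure combination $\Gamma_x$ with isolated center at $x$ and $\Gamma_x\sim_\bQ -\lambda_x K_X$, $\lambda_x<\tfrac{1}{2}$. Repeating the construction at a second very general point $y$ yields $\Gamma_y$ with $\lambda_y<\tfrac{1}{2}$. Then $-(K_X+\Gamma_x+\Gamma_y)\sim_\bQ (1-\lambda_x-\lambda_y)(-K_X)$ is ample because $\lambda_x+\lambda_y<1$ and $-K_X$ is ample. Applying Lemma \ref{lem:weak connectedness-2} with $B=0$ (so the integrality hypothesis on $(p^e-1)B$ is vacuous) yields the desired contradiction. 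The main obstacle has already been absorbed into the preparatory material, namely the construction of three divisors $D_1,D_2,D_3$ each $\bQ$-linearly equivalent to $-K_X$ with zero-dimensional scheme-theoretic intersection at $x$ and controlled $F$-pure coefficients; once these are in hand, as above, the theorem reduces to a bookkeeping exercise and an appeal to the weak connectedness statement.
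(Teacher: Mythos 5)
Your proposal is correct and follows essentially the same route as the paper: the paper's own proof is likewise a contradiction argument that assembles the preceding construction of $\mathbf{D}=(D_1,D_2,D_3)$ (via asymptotic Riemann--Roch, Lemma \ref{lem:multiple bounded}, the two-case dichotomy using Lemma \ref{lem:global generation}, and the bound $\lambda\le\tfrac{6}{N}+\delta$) and then invokes Lemma \ref{lem:weak connectedness-2}. The only difference is that you spell out the two-point application of the connectedness lemma (with $B=0$ and $\lambda_x+\lambda_y<1$) that the paper leaves implicit.
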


\begin{proof}
Let $0<\delta<\frac{1}{2}$ be chosen as before, then by the previous lemma, as $N\gg 0$ there exists $\mathbf{D}=(D_1,D_2,D_3)$ such that $\cap D_i$ is locally supported at a very general point $x$ and $\Gamma(\mathbf{D})\sim_\bQ -\lambda K_X$ for some $\lambda<\frac{1}{2}$. This contradicts Lemma \ref{lem:weak connectedness-2}.
\end{proof}

Finally we finish the proof of the main theorem and its corollaries.

\begin{proof}[Proof of Theorem \ref{thm:weakbdd_m}]
This follows from Lemma \ref{lem:reduction}, Corollary \ref{cor:mfs}, Lemma \ref{lem:3-fold-mfs} and Theorem \ref{thm:rho=1}.
\end{proof}

\begin{proof}[Proof of Theorem \ref{thm:weakbdd}]
By \cite[Theorem 6.4]{demailly} we have $\epsilon(-K_X,x)=\epsilon_m(-K_X,x)$ for all smooth point $x\in X$, so the result follows immediately from Theorem \ref{thm:weakbdd_m}.
\end{proof}

\begin{proof}[Proof of Corollary \ref{cor:birbdd}]
By \cite[Theorem 1.1]{F-seshadri}, $|-2K_X|$ induces a birational map, so the corollary follows from Theorem \ref{thm:weakbdd} and \cite[Lemma 2.4.2]{hmx}.
\end{proof}

\bibliography{ref}
\bibliographystyle{alpha}

\end{document}